\theoremstyle{definition}
\newtheorem{defn}{Definition}[section]
\newtheorem{fact}[defn]{Fact}
\newtheorem{prop}[defn]{Proposition}
\newtheorem{thm}[defn]{Theorem}
\newtheorem{corr}[defn]{Corollary}
\newtheorem{lem}[defn]{Lemma}
\newtheorem{question}[defn]{Question}
\newtheorem{remark}[defn]{Remark}
\newtheorem{claim}[defn]{claim}
\title[Erd\H{o}s--Dushnik--Miller theorem without AC]{On Erd\H{o}s--Dushnik--Miller theorem without AC}
\author{Amitayu Banerjee}
\address{Alfr\'ed R\'enyi Institute of Mathematics, Reáltanoda utca 13-15, Budapest-1053, Hungary}
\email{banerjee.amitayu@gmail.com}
\author{Alexa Gopaulsingh}
\address{Department of Logic, Institute of Philosophy, E\"otv\"os Lor\'and University, Budapest, Hungary}
\email{alexa279e@gmail.com}
\date{}
\subjclass[2020]{03E25, 03E35, 05C63, 06A07.}
\keywords{Axiom of Choice, Infinite graphs, Erd\H{o}s--Dushnik--Miller theorem, Ramsey’s Theorem, Boolean Prime Ideal Theorem, 
Fraenkel-Mostowski models of $\mathsf{ZFA}$}
\begin{document}
\begin{abstract}
In $\mathsf{ZFA}$ (Zermelo-Fraenkel set theory with the Axiom of Extensionality weakened to allow the existence
of atoms), we prove that the strength of the proposition $\mathsf{EDM}$ (``If $G=(V_{G}, E_{G})$ is a graph such that $V_{G}$ is uncountable, then for all coloring $f:[V_{G}]^{2}\rightarrow \{0,1\}$ either there is an uncountable set monochromatic in color $0$, or there is a countably infinite set monochromatic in color 1") is {\em strictly between}  $\mathsf{DC_{\aleph_{1}}}$ (where $\mathsf{DC_{\aleph_{1}}}$ is Dependent Choices for $\aleph_{1}$, a weak choice form stronger than Dependent Choices ($\mathsf{DC}$)) and Kurepa's principle (``Any partially ordered set such that all of its antichains are finite and all of its chains are countable is countable"). Among other new results, we 
study the relations of $\mathsf{EDM}$ with $\mathsf{BPI}$ (Boolean Prime Ideal Theorem), $\mathsf{RT}$ (Ramsey's  Theorem), De Bruijn–Erd\H{o}s theorem for $n$-colorings, K\H{o}nig's Lemma and several other weak choice forms. 
Moreover, we answer a  part of a question raised by Lajos Soukup.
\end{abstract}
\maketitle
\section{introduction}

In 1941, Dushnik and Miller established the proposition 
{\em ``If $\kappa$ is an uncountable cardinal, then for all coloring $f:[\kappa]^{2}\rightarrow \{0,1\}$, either there is a set of cardinality $\kappa$ monochromatic in color 0, or there is a countably infinite set monochromatic in color 1"} in $\mathsf{ZFC}$, and gave credit to Erd\H{o}s for the proof of the result for the case in which $\kappa$ is a singular cardinal—see \cite[Theorem 5.22 and footnote 6 on page 606]{DM1941}.
The above result is uniformly known as Erd\H{o}s--Dushnik--Miller theorem. In $\mathsf{ZFC}$, the theorem applies to prove the proposition {\em ``Every partially ordered set such that all of its chains are finite and all of its antichains are countable is countable"} (abbreviated here as ``$\mathsf{CAC^{\aleph_{0}}}$")
and Kurepa's result on partially ordered sets stated in the
abstract (abbreviated here as ``$\mathsf{CAC_{1}^{\aleph_{0}}}$").
Kurepa \cite{Kur1958} explicitly proved $\mathsf{CAC_{1}^{\aleph_{0}}}$ in $\mathsf{ZFC}$ in response to Sierpi\'{n}ski's question \cite[pages 190–191]{Sie1958}. 
Banerjee \cite{Ban2} and Banerjee and Gyenis \cite{BG2021} studied some relations of $\mathsf{CAC^{\aleph_{0}}}$ and $\mathsf{CAC_{1}^{\aleph_{0}}}$ with weak forms of the Axiom of Choice ($\mathsf{AC}$). Recently, Tachtsis \cite{Tac2022} investigated the deductive strength of $\mathsf{CAC_{1}^{\aleph_{0}}}$ without $\mathsf{AC}$ in more detail. 
Among various results, Tachtsis \cite{Tac2022}  proved that $\mathsf{CAC_{1}^{\aleph_{0}}}$ holds in  $\mathsf{ZF + DC_{\aleph_{1}}}$\footnote{$\mathsf{ZF}$ denotes Zermelo--Fraenkel set theory without $\mathsf{AC}$. Complete definitions of the choice forms will be given in \textbf{section 2}.} and $\mathsf{CAC_{1}^{\aleph_{0}}}$ holds in the Mostowski linearly ordered model (labeled as Model $\mathcal{N}_{3}$ in \cite{HR1998}) as well as the basic Fraenkel model (labeled as Model $\mathcal{N}_{1}$ in \cite{HR1998}).
Inspired by the research work of \cite{Tac2022}, we study the deductive strength of $\mathsf{EDM}$ without $\mathsf{AC}$. Let $X$ be a set. We note that without $\mathsf{AC}$, there are two definitions of {\em uncountable} sets: 
\begin{enumerate}
    \item {\em $X$ is uncountable} if $\vert X\vert \not\leq \aleph_{0}$ (i.e., there is no injection from $X$ into $\aleph_{0}$).
    \item {\em $X$ is uncountable} if $\aleph_{0} < \vert X\vert$ (i.e., there is an injection from $\aleph_{0}$ into $X$ and there is no injection from $X$ into $\aleph_{0}$).
\end{enumerate}
We note that all the results in this paper are obtained with the first definition of uncountable sets. Lajos Soukup asked the following question. 

\begin{question}
What is the relationship between $\mathsf{CAC_{1}^{\aleph_{0}}}$ and  $\mathsf{CAC^{\aleph_{0}}}$ in $\mathsf{ZF}$ and $\mathsf{ZFA}$?   
\end{question}

\textbf{Main result.} Fix $X\in \{\mathsf{CAC_{1}^{\aleph_{0}}}, \mathsf{CAC^{\aleph_{0}}}\}$. The first author proves that the strength of $\mathsf{EDM}$ is strictly  between $\mathsf{DC_{\aleph_{1}}}$ and $X$, and $\mathsf{CAC_{1}^{\aleph_{0}}}$ does not imply $\mathsf{CAC^{\aleph_{0}}}$ in $\mathsf{ZFA}$ (cf. \textbf{Theorems 4.1, 4.2, 4.4, Corollary 3.6}).

\textbf{Other results.} The first author observes the following in $\mathsf{ZF}$:
\begin{enumerate}
    \item $\mathsf{CAC^{\aleph_{0}}}$ implies $\mathsf{AC^{\aleph_{0}}_{\aleph_{0}}}$ (Every countably infinite family of countably infinite sets has a choice function). Thus $\mathsf{CAC^{\aleph_{0}}}$ is not provable in $\mathsf{ZF}$ (\textbf{Proposition 3.4}).
    \item $\mathsf{WOAM}$ (Every set is either well-orderable or has an amorphous subset) implies
    $\mathsf{RT}$ for any locally countable connected graph (\textbf{Proposition 3.7}).
    \item $\mathsf{EDM}$ is strictly stronger than $\mathsf{RT}$  (\textbf{Theorem 4.1(3,4)}). 
    
    \item $\mathsf{WOAM}$ + $\mathsf{RT}$ implies $\mathsf{EDM}$ (\textbf{Theorem 4.2(1)}). 
     \item  $\mathsf{CAC^{\aleph_{0}}}+\mathsf{CAC_{1}^{\aleph_{0}}}$ + $\mathsf{A}$(Antichain Principle) does not imply $\mathsf{EDM}$ in $\mathsf{ZFA}$ (\textbf{Theorem 4.4}). 
\end{enumerate}   

\subsection{The uniqueness of algebraic closures, and \L{}o\'{s}'s theorem} Pincus \cite[\textbf{Note 41}]{HR1998} proved that the statement ``If a field has an algebraic closure, then it is unique up to isomorphism" \cite[\textbf{Form 233}]{HR1998} does not imply ``there are no amorphous sets" in $\mathsf{ZFA}$.
Recently, Tachtsis \cite{Tac2019a} constructed a model of $\mathsf{ZFA}$+$\neg \mathsf{AC}$ to prove that $\mathsf{AC^{LO}}$ (Every linearly ordered family of non-empty sets has a choice function) does not imply $\mathsf{LT}$ (if $\mathcal{A}=\langle A, \mathcal{R}^{\mathcal{A}}\rangle$ is a non-trivial relational $\mathcal{L}$-structure over some language $\mathcal{L}$, and $\mathcal{U}$ be an ultrafilter on a non-empty set $I$, then the ultrapower $\mathcal{A}^{I}/\mathcal{U}$ and $\mathcal{A}$ are elementarily equivalent). 

\textbf{Other results.} We observe the following:
\begin{enumerate}
    \item $\mathsf{AC^{LO}}$ + $\mathsf{EDM}$ + \textbf{Form 233} does not imply $\mathsf{LT}$ in $\mathsf{ZFA}$  (\textbf{Theorem 5.2}).
\end{enumerate}
\subsection{Remarks} 
Blass \cite{Bla1977} investigated the strength of $\mathsf{RT}$ in the hierarchy of choice forms. In \textbf{section 6}, applying the above-mentioned results and mainly inspired by the results of \cite{Bla1977}, we remark that $\mathsf{EDM}$ is independent of each of $\mathsf{BPI}$, $\mathsf{KW}$ (Kinna-Wagner Selection Principle), $\mathsf{AC_{WO}}$ (Axiom of Choice for non-empty, well-orderable sets), ``There are no amorphous sets", $n$-coloring theorem (De Bruijn–Erd\H{o}s theorem for $n$-colorings), and $\mathsf{A}$ (Antichain Principle) in $\mathsf{ZFA}$. Moreover, $\mathsf{LT}$ and $\mathsf{EDM}$ are mutually independent in $\mathsf{ZF}$.
In \cite{Tac2022}, Tachtsis proved that $\mathsf{CAC_{1}^{\aleph_{0}}}$ and $\mathsf{DT}$ (Dilworth’s
theorem) are mutually independent in $\mathsf{ZFA}$. A natural question which arises is about the relation of $\mathsf{CAC^{\aleph_{0}}}$ and $\mathsf{EDM}$ with $\mathsf{DT}$. We also remark that $\mathsf{DT}$ is independent of $\mathsf{EDM}$ and $\mathsf{CAC^{\aleph_{0}}}$ in $\mathsf{ZFA}$. 

\section{Basics and diagram of results}
\begin{defn}
Suppose $X$ and $Y$ are two sets. We write:
\begin{enumerate}
    \item $\vert X\vert \leq \vert Y\vert$ or $\vert Y\vert \geq \vert X\vert$, if there is an injection $f : X \rightarrow Y$.
    \item $\vert X\vert = \vert Y\vert$, if there is a bijection $f : X \rightarrow Y$.
    \item $\vert X\vert < \vert Y\vert$ or $\vert Y\vert > \vert X\vert$, if $\vert X\vert \leq \vert Y\vert$ and $\vert X\vert \not= \vert Y\vert$.
    \item If $f:X\rightarrow Y$ is a function, then we denote ``the range of $f$" by $ran(f)$ and ``the domain of $f$" by $dom(f)$.
\end{enumerate}
\end{defn}
\begin{defn}
Let $(P, \leq)$ be a partially ordered set, or ‘poset’ in short. 
A subset $D \subseteq P$ is a {\em chain} if $(D, \leq\restriction D)$ is linearly ordered. A subset $A\subseteq P$ is an {\em antichain}
if no two elements of $A$ are comparable under $\leq$. The size of the largest antichain of $(P, \leq)$ is known as its {\em width}.  A subset $C \subseteq P$ is {\em cofinal} in $P$ if for every $x \in P$ there is an element $c \in C$ such that $x \leq c$. 
A {\em tree} is a connected undirected graph without circuits one of whose vertices is designated as the origin. 
We note that a tree may also be defined as a poset $(P, <)$ with a least element and with the property that for any element $x \in P$ the set of 
predecessors of $x$ is a finite set that is linearly ordered by $<$. 
The number of vertices on the unique path connecting a vertex $v$ with the origin 
is the {\em level} of $v$, denoted by $l(v)$. A vertex $v'$ is a {\em successor} of a vertex $v$ if $v$ and $v'$ are connected by an edge and $l(v') = l(v) + 1$. 
A tree is {\em locally finite} if each vertex has only finitely many successors. An {\em $\omega$-tree} is a locally finite tree with at least one vertex in level $n$ for each $n\in\omega$ (cf. \cite[Note 21]{HR1998}). An inﬁnite set $X$ is {\em amorphous} if $X$ cannot be written as a disjoint union of two infinite subsets. A set $X$ is {\em Dedekind-finite} if $\aleph_{0} \not
\leq \vert X\vert$. Otherwise, $X$ is {\em Dedekind-infinite}. We say that a graph $G=(V_{G}, E_{G})$ is {\em locally countable} if for every $v \in V_{G}$, the set of neighbors of $v$ is countable. The graph $G$ is {\em connected} if any two vertices are joined by a path of finite length.
\end{defn}

\begin{defn}{\textbf{(A list of choice forms).}}

\begin{enumerate}
    \item The {\em Axiom of Choice}, $\mathsf{AC}$ \cite[\textbf{Form 1}]{HR1998}: Every family of non-empty sets has a choice function.
    \item The {\em Boolean Prime Ideal Theorem}, $\mathsf{BPI}$ (\cite[\textbf{Form 14}]{HR1998}): Every Boolean algebra has a prime ideal.
    \item The {\em Kinna-Wagner Selection Principle}, $\mathsf{KW}$ (\cite[\textbf{Form 15}]{HR1998}): For every set $M$ there is a function $f$ such that for all $A \in M$, if $\vert A\vert > 1$ then $\emptyset \neq f(A) \subsetneq A$.

    \item The {\em Axiom of Multiple Choice}, $\mathsf{MC}$ (\cite[\textbf{Form 67}]{HR1998}): Every family $\mathcal{A}$ of non-empty sets has a {\em multiple choice function}, i.e., there is a function $f$ with domain $\mathcal{A}$ such that for every $A \in \mathcal{A}$, $\emptyset\neq f(A)\in [A]^{<\omega}$.
    \item $\mathsf{MC}^{\aleph_{0}}_{\aleph_{0}}$ (\cite[\textbf{Form 350}]{HR1998}): Every denumerable, i.e. countably infinite, family of denumerable sets has a multiple choice function. 
    
    \item $\mathsf{AC_{WO}}$ (\cite[\textbf{Form 60}]{HR1998}): Every set of non-empty, well-orderable sets has a choice function.
    \item $\mathsf{AC^{LO}}$ \cite[\textbf{Form 202}]{HR1998}: Every linearly ordered set of non-empty sets has a 
choice function.
    
    \item $\mathsf{PAC^{\aleph_{1}}_{fin}}$ (cf. \cite{Ban2}): Every $\aleph_{1}$-sized family $\mathcal{A}$ of non-empty finite sets has an $\aleph_{1}$-sized subfamily $\mathcal{B}$ with a choice function.

    \item  $\mathsf{AC_{\aleph_{0}}^{\aleph_{0}}}$ \cite[\textbf{Form 32A}]{HR1998}: Every denumerable family of denumerable sets has a choice function. We recall that $\mathsf{AC_{\aleph_{0}}^{\aleph_{0}}}$ is equivalent to $\mathsf{PAC_{\aleph_{0}}^{\aleph_{0}}}$ \cite[\textbf{Form 32B}]{HR1998} (Every denumerable family $\mathcal{A}$ of denumerable sets has an infinite subfamily $\mathcal{B}$ with a choice function).
    \item  $\mathsf{AC_{fin}^{\aleph_{0}}}$ \cite[\textbf{Form 10}]{HR1998}:  Every denumerable family of non-empty finite sets has a choice function. 
    \item $\mathsf{AC_{n}^{-}}$ for each $n\in\omega\backslash \{0,1\}$ \cite[\textbf{Form 342($n$)}]{HR1998}: Every infinite family $\mathcal{A}$ of $n$-element sets has a {\em partial choice function}, i.e., $\mathcal{A}$ has an infinite subfamily $\mathcal{B}$ with a choice function. 
    
    \item $\mathsf{WOAM}$ \cite[\textbf{Form 133}]{HR1998}: Every set is either well-orderable or has an amorphous subset.

    \item The {\em Principle of Dependent Choice}, $\mathsf{DC}$(\cite[\textbf{Form 43}]{HR1998}): If $S$ is a relation on a non-empty set $A$ and $(\forall x\in A)(\exists y\in A)(xSy)$ then there is a sequence $(a_{n})_{n\in\omega}$ of elements of $A$ such that $(\forall n\in \omega)(a_{n}S a_{n+1})$. 

    \item $\mathsf{DC_{\kappa}}$ 
    where $\alpha$ is the ordinal such that $\kappa=\aleph_{\alpha}$
    \cite[\textbf{Form 87($\alpha$)}]{HR1998}: Let $S$ be a non-empty set and let $R$ be a binary relation such that for every $\beta<\kappa$ and every $\beta$-sequence $s =(s_{\epsilon})_{\epsilon<\beta}$ of elements of $S$ there exists $y \in S$ such that $s R y$. Then there is a function $f : \kappa \rightarrow S$ such that for every $\beta < \kappa$, $(f\restriction \beta) R f(\beta)$. We note that $\mathsf{DC_{\aleph_{0}}}$ is a reformulation of $\mathsf{DC}$.
    
    \item $\mathsf{DF = F}$ \cite[\textbf{Form 9}]{HR1998}: Every Dedekind-finite set is finite.
    
    \item $\mathsf{W_{\aleph_{\alpha}}}$ (cf. \cite[\textbf{Chapter 8}]{Jec1973}): For every $X$, either $\vert X\vert \leq \aleph_{\alpha}$ or $\vert X\vert \geq \aleph_{\alpha}$. 
    \item \cite[\textbf{Form 233}]{HR1998}: If a field has an algebraic closure, then it is unique up to isomorphism. 
    \item  $\mathsf{WUT}$ \cite[\textbf{Form 231}]{HR1998}: The union of a well-orderable collection of well-orderable sets is well-orderable.
    
    \item $\mathsf{AC^{WO}_{fin}}$ \cite[\textbf{Form 122}]{HR1998}: Every well-ordered set of non-empty finite sets has a
    choice function. 
    
    \item The {\em Countable Union Theorem}, $\mathsf{CUT}$ \cite[\textbf{Form 31}]{HR1998}: The union of a countable family of countable sets is countable.
    
    \item {\em $\mathsf{CS}$}: Every poset without a maximal element has two disjoint cofinal subsets.
    
    \item {\em $\mathsf{CWF}$}: Every poset has a cofinal well-founded subset.
    \item The {\em Antichain Principle, $\mathsf{A}$}: Every poset has a maximal antichain.

    \item The {\em $n$-coloring theorem}, $\mathcal{P}_{n}$:
    If all finite subgraphs of a graph $G$ are $n$-colorable then $G$ is $n$-colorable.
    \item {\em Dilworth’s Theorem}, $\mathsf{DT}$: If $(P,\leq)$ is a poset of width $k$ for some $k\in \omega$, then $P$ can be partitioned into $k$ chains.
    
    \item {\em Ramsey’s Theorem}, $\mathsf{RT}$ \cite[\textbf{Form 17}]{HR1998}: For every infinite set $A$ and for every partition of the set $[A]^{2}$ into two sets $X$ and $Y$, there is an infinite subset $B \subseteq A$ such that either $[B]^2 \subseteq X$ or $[B]^2 \subseteq Y$.\footnote{Equivalently, for every infinite graph $G=(V,E)$ and for all $c : [V]^{2} \rightarrow 2$, there exists an infinite set
    $Y \subseteq V$ such that
    $[Y]^{2}$ is $c$-monochromatic.}
    
    \item The {\em Chain/Antichain Principle}, $\mathsf{CAC}$ \cite[\textbf{Form 217}]{HR1998}: Every infinite poset has an infinite chain or an infinite antichain.   
    
    \item {\em \L{}o\'{s}'s theorem}, $\mathsf{LT}$ (\cite[\textbf{Form 253}]{HR1998}): If $\mathcal{A}=\langle A, \mathcal{R}^{\mathcal{A}}\rangle$ is a non-trivial relational $\mathcal{L}$-structure over some language $\mathcal{L}$, and $\mathcal{U}$ be an ultrafilter on a non-empty set $I$, then the ultrapower $\mathcal{A}^{I}/\mathcal{U}$ and $\mathcal{A}$ are elementarily equivalent.  
\end{enumerate}
\end{defn}

\begin{defn}{\textbf{(A list of combinatorial statements).}}

\begin{enumerate}
    \item $\mathsf{EDM}$: If $G=(V_{G}, E_{G})$ is a graph such that $V_{G}$ is uncountable, then for all coloring $f:[V_{G}]^{2}\rightarrow \{0,1\}$ either there is an uncountable set monochromatic in color $0$, or there is a countably infinite set monochromatic in color 1.
    
    \item $\mathsf{EDM'}$: $\mathsf{EDM}$ restricted to graphs based on a well-ordered set of vertices.
    
    \item $\mathsf{CAC^{\aleph_{0}}}$:  Every poset such that all of its chains are finite and all of its antichains are countable is countable.
    
    \item $(\mathsf{CAC^{\aleph_{0}}})'$:  $\mathsf{CAC^{\aleph_{0}}}$ restricted to posets based on a well-ordered set of elements.
    
    \item $\mathsf{CAC_{1}^{\aleph_{0}}}$:  Every poset such that all of its antichains are finite and all of its chains are countable is countable.
    
    \item $(\mathsf{CAC_{1}^{\aleph_{0}}})'$:  $\mathsf{CAC_{1}^{\aleph_{0}}}$ restricted to posets based on a well-ordered set of elements.
    
    \item $\mathsf{CACT^{\aleph_{0}}}$:  $\mathsf{CAC^{\aleph_{0}}}$ restricted to $\omega$-trees.
    
    \item $(\mathsf{CACT^{\aleph_{0}}})'$:  $\mathsf{CAC^{\aleph_{0}}}$ restricted to $\omega$-trees based on a well-ordered set of elements.
    
    \item $\mathsf{CACT_{1}^{\aleph_{0}}}$:  $\mathsf{CAC_{1}^{\aleph_{0}}}$ restricted to $\omega$-trees.
    \item $(\mathsf{CACT_{1}^{\aleph_{0}}})'$:  $\mathsf{CAC_{1}^{\aleph_{0}}}$ restricted to $\omega$-trees based on a well-ordered set of elements.

    \item For a set $A$, Sym$(A)$ and  FSym$(A)$ denote the set of all permutations of $A$ and the set of all $\phi \in$ Sym$(A)$ such that $\{x \in A : \phi(x) \neq x\}$ is finite. For a set $A$ of size at least $\aleph_{\alpha}$, $\aleph_{\alpha}$Sym$(A)$ denote the set of all $\phi \in$ Sym$(A)$ such that $\{x \in A : \phi(x) \neq x\}$ has cardinality at most $\aleph_{\alpha}$  (cf. \cite[section 2]{Tac2019}).
\end{enumerate}
\end{defn}

\subsection{Permutation models and Mostowski’s intersection lemma.} We start with a model $M$ of $\mathsf{ZFA+AC}$ where $A$ is a set of atoms, $\mathcal{G}$ is a group of permutations of $A$ and $\mathcal{F}$ is a normal filter of subgroups of $\mathcal{G}$. The Fraenkel–Mostowski model, or the permutation model $\mathcal{N}$ with respect to $M$, $\mathcal{G}$  and $\mathcal{F}$ is defined by the equality:
\begin{center}

    $\mathcal{N} = \{x \in M : (\forall t \in \mathsf{TC}(\{x\}))(sym_{\mathcal{G}}(t) \in \mathcal{F})\}$
\end{center}
where for a set $x\in M$, $sym_{\mathcal {G}}(x) =\{g\in \mathcal {G} \mid g(x) = x\}$ and  $\mathsf{TC}(x)$ is the transitive closure of $x$ in $M$. If $\mathcal{I}\subseteq\mathcal{P}(A)$ is a normal ideal, then $\{$fix$_{\mathcal{G}}(E): E\in\mathcal{I}\}$ generates a normal filter (say $\mathcal{F}_{\mathcal{I}}$) over $\mathcal{G}$, where fix$_{\mathcal{G}}(E)=\{\phi \in \mathcal{G} : \forall y \in E (\phi(y) = y)\}$. Let $\mathcal{N}$ be the permutation model determined by $M$, $\mathcal{G}$, and
$\mathcal{F}_{\mathcal{I}}$. We recall that $\mathcal{N}$ is a model of $\mathsf{ZFA}$ (cf. \cite[Theorem 4.1, page 46]{Jec1973}). 
We say $E\in \mathcal{I}$ is a {\em support} of a set $\sigma\in \mathcal{N}$ if fix$_{\mathcal{G}}(E)\subseteq sym_{\mathcal{G}} (\sigma$). 
We recall some terminologies from \cite[sections 1,2] {Bru1985}. Let $\Delta(E)=\{\sigma:$ fix$_{\mathcal{G}}(E)\subseteq sym_{\mathcal{G}} (\sigma$)$\}$ if $E\in \mathcal{I}$. We say that $\mathcal{N}$ satisfies {\em Mostowski’s intersection lemma} if 
$\Delta(E \cap F) = \Delta(E) \cap \Delta(F)$ for every $E,F\in \mathcal{I}$.  
In this paper, 
\begin{itemize}
    \item We follow the labeling of the models from \cite{HR1998}. $\mathcal{N}_{1}$ is the basic Fraenkel model, $\mathcal{N}_{2}$ is the second Fraenkel model, $\mathcal{N}_{3}$ is the Mostowski linearly ordered model, and $\mathcal{N}_{41}$ is a variation of $\mathcal{N}_{3}$ (cf. \cite{HR1998}).
    
    \item Fix any $n\in \omega\backslash\{0,1\}$. We denote by $\mathcal{N}_{HT}^{1}(n)$ the permutation model constructed in \cite[Theorem 8]{HT2020}.
\end{itemize}

\begin{lem}
{\em An element $x$ of $\mathcal{N}$ is well-orderable in $\mathcal{N}$ if and only if {\em fix}$_{\mathcal{G}}(x)\in \mathcal{F}_{\mathcal{I}}$ {\em (cf. \cite[Equation (4.2), page 47]{Jec1973})}. Thus, an element $x$ of $\mathcal{N}$ with support $E$ is well-orderable in $\mathcal{N}$ if {\em fix}$_{\mathcal{G}}(E) \subseteq$ {\em fix}$_{\mathcal{G}}(x)$.
}
\end{lem}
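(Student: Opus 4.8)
The plan is to establish the biconditional in both directions and then to read off the ``Thus'' clause as an immediate corollary. Two facts will do the work. First, the defining property of the permutation model: $y \in \mathcal{N}$ precisely when every member of $\mathsf{TC}(\{y\})$ has a support lying in $\mathcal{I}$ (equivalently, has setwise stabiliser in $\mathcal{F}_{\mathcal{I}}$). Second, every ordinal is a pure set and hence is fixed pointwise by every $\phi \in \mathcal{G}$; consequently, for a function $f$ whose domain is an ordinal and any $\phi \in \mathcal{G}$, one has $\phi(f(\beta)) = (\phi(f))(\phi(\beta))$ for each $\beta \in \mathrm{dom}(f)$.

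For the forward implication, suppose $x$ is well-orderable in $\mathcal{N}$, so there are an ordinal $\alpha$ and a bijection $f \colon \alpha \to x$ with $f \in \mathcal{N}$. Pick a support $E \in \mathcal{I}$ of $f$, so that $\mathrm{fix}_{\mathcal{G}}(E) \subseteq sym_{\mathcal{G}}(f)$. For $\phi \in \mathrm{fix}_{\mathcal{G}}(E)$ and $y \in x$, writing $y = f(\beta)$ one gets $\phi(y) = \phi(f(\beta)) = (\phi(f))(\phi(\beta)) = f(\beta) = y$, using $\phi(f) = f$ and $\phi(\beta) = \beta$. Hence $\mathrm{fix}_{\mathcal{G}}(E) \subseteq \mathrm{fix}_{\mathcal{G}}(x)$; since $\mathrm{fix}_{\mathcal{G}}(E) \in \mathcal{F}_{\mathcal{I}}$ and $\mathcal{F}_{\mathcal{I}}$ is a filter of subgroups (closed upwards under supergroups), it follows that $\mathrm{fix}_{\mathcal{G}}(x) \in \mathcal{F}_{\mathcal{I}}$.

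For the reverse implication, assume $\mathrm{fix}_{\mathcal{G}}(x) \in \mathcal{F}_{\mathcal{I}}$ and fix $E \in \mathcal{I}$ with $\mathrm{fix}_{\mathcal{G}}(E) \subseteq \mathrm{fix}_{\mathcal{G}}(x)$. Working in the ground model $M$, which satisfies $\mathsf{ZFA} + \mathsf{AC}$, choose a bijection $f \colon \alpha \to x$ for some ordinal $\alpha$. The crux is to check that $f \in \mathcal{N}$. First, for $\phi \in \mathrm{fix}_{\mathcal{G}}(E)$ one has $\phi(\beta) = \beta$ and $\phi(f(\beta)) = f(\beta)$ for every $\beta < \alpha$ (the latter because $f(\beta) \in x$ and $\mathrm{fix}_{\mathcal{G}}(E)$ fixes $x$ pointwise), whence $\phi(f) = f$; so $E$ is a support of $f$. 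Second, every element of $\mathsf{TC}(\{f\})$ is hereditarily symmetric: the ordinals below $\alpha$ are fixed pointwise by all of $\mathcal{G}$; the elements of $x$ lie in $\mathcal{N}$ already; and the Kuratowski pairs $(\beta, f(\beta))$ together with their components all admit $E$ as a support, by the same pointwise-fixing computation. Hence $f \in \mathcal{N}$, and $f$ witnesses that $x$ is well-orderable in $\mathcal{N}$.

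Finally, the ``Thus'' sentence is immediate: if $x \in \mathcal{N}$ has support $E$ and $\mathrm{fix}_{\mathcal{G}}(E) \subseteq \mathrm{fix}_{\mathcal{G}}(x)$, then $\mathrm{fix}_{\mathcal{G}}(x) \supseteq \mathrm{fix}_{\mathcal{G}}(E) \in \mathcal{F}_{\mathcal{I}}$, so $\mathrm{fix}_{\mathcal{G}}(x) \in \mathcal{F}_{\mathcal{I}}$ and the biconditional gives that $x$ is well-orderable in $\mathcal{N}$. The only genuinely delicate point will be the bookkeeping in the reverse direction — verifying that no set in $\mathsf{TC}(\{f\})$ fails to have a support in $\mathcal{I}$ — but this is routine once one has the pointwise-fixing of $x$ and of the ordinals in hand. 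The statement is essentially \cite[Equation (4.2), page 47]{Jec1973}; the proof above merely reorganises the standard argument so as to record the convenient sufficient condition stated in the second sentence of the lemma.
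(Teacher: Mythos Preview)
Your argument is correct and is precisely the standard proof one finds behind the cited reference; the paper itself does not supply a proof at all but simply defers to \cite[Equation (4.2), page 47]{Jec1973}. There is nothing to compare: you have written out the argument that the paper takes for granted.
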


We refer the reader to \cite[Note 103, pages 283–286]{HR1998} for the definition of the terms ``injective cardinality $\vert x \vert_{-}$ of $x$", ``injectively boundable statement" and ``boundable statement".
\begin{thm}{\textbf{(Pincus’ Transfer Theorem; cf. \cite[\textbf{Theorem 3A3}]{Pin1972})}}
{\em If $\Phi$ is a conjunction of injectively boundable statements which hold in the Fraenkel–Mostowski model $V_{0}$, then there is a $\mathsf{ZF}$ model $V \supset V_{0}$ with the same ordinals and cofinalities as $V_{0}$, where $\Phi$ holds.}
\end{thm}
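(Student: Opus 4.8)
This is a theorem of Pincus, so I only outline the argument underlying it; in this paper it is used as a black box at each transfer application. The plan is to reduce the claim, via the Jech--Sochor first embedding theorem, to an absoluteness statement about bounded-rank (more precisely, bounded injective-cardinality) fragments of $V_{0}$. First I would unwind the hypothesis: writing $\Phi=\Phi_{1}\wedge\cdots\wedge\Phi_{k}$ with each $\Phi_{i}$ injectively boundable, I would fix for each $i$, using the definition in \cite[Note 103]{HR1998}, an ordinal $\alpha_{i}$ such that the witnesses asserted by $\Phi_{i}$ (relative to its parameters) may be taken of injective cardinality below $\aleph_{\alpha_{i}}$, and put $\alpha=\sup_{i\leq k}\alpha_{i}$, which is legitimate since $k$ is finite.

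Next I would pass from the permutation model to a symmetric extension of its kernel. Let $M^{\ast}$ be the class of hereditarily atom-free sets of the ground model $M$ of $\mathsf{ZFA+AC}$ underlying $V_{0}$, and apply the Jech--Sochor embedding theorem to obtain a symmetric forcing extension $V$ of $M^{\ast}$ together with an $\in$-isomorphic copy inside $V$ of the fragment of $V_{0}$ consisting of all sets of injective cardinality below some $\aleph_{\beta}$ with $\beta$ chosen well above $\alpha$ and above the rank of the group/filter data defining $V_{0}$. By construction $V$ is a model of $\mathsf{ZF}$ with $V_{0}\subseteq V$, it has the same ordinals as $V_{0}$, and — by choosing the symmetric system homogeneous and sufficiently closed and tracking the relevant chain-condition and closure bounds — the same cofinality function; these give the ``same ordinals and cofinalities'' clauses.

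The heart of the argument, and the step I expect to be the main obstacle, is the absoluteness lemma: under the Jech--Sochor copy, each injectively boundable conjunct $\Phi_{i}$ is reflected between the copied fragment of $V_{0}$ and $V$. The point is precisely that the truth of an injectively boundable statement, relative to any parameter $a$, is decided by a set whose injective cardinality is bounded by a function of $\vert a\vert_{-}$, hence by a set lying in the copied fragment; so a witness in $V_{0}$ maps to a witness in $V$, and absence of a witness in $V_{0}$ is preserved as well. This is exactly where the hypothesis is used: a statement quantifying over the whole universe (for instance an instance of global choice) need not survive this bounded-rank transfer, so ``injectively boundable'' cannot be dropped. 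Finally, in each concrete application later in the paper one must check, as routine bookkeeping against \cite[Note 103]{HR1998}, that the relevant conjunction (e.g. $\mathsf{EDM}$ together with the chosen failures of choice) is indeed injectively boundable; only then does the theorem apply.
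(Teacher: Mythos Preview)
The paper does not prove this theorem at all: it is quoted verbatim as Pincus' result with a citation to \cite[Theorem 3A3]{Pin1972} and used thereafter as a black box, exactly as you yourself note in your first sentence. So there is no ``paper's own proof'' to compare against; your outline goes well beyond what the paper supplies.

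That said, a brief comment on the sketch itself. The broad strategy you describe---embed a suitable fragment of the permutation model into a symmetric $\mathsf{ZF}$-model and then argue absoluteness for the injectively boundable conjuncts---is the right shape, but you should be aware that it somewhat understates Pincus' contribution. The Jech--Sochor first embedding theorem already handles \emph{boundable} statements (those determined inside a fixed rank-initial segment), and the passage from boundable to \emph{injectively} boundable is precisely the nontrivial step: one must argue that quantification restricted by injective cardinality, rather than by rank, is still absolute under the embedding. Your paragraph calling this ``the heart of the argument'' is correct, but the sentence ``apply the Jech--Sochor embedding theorem to obtain \ldots an $\in$-isomorphic copy \ldots of the fragment of $V_{0}$ consisting of all sets of injective cardinality below some $\aleph_{\beta}$'' is not quite right as stated---Jech--Sochor copies a rank-initial segment, and relating that to injective-cardinality bounds is additional work that Pincus carries out. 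For the purposes of this paper none of this matters, since the theorem is only invoked, not reproved.
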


\begin{lem}{\textbf{(Brunner; cf. \cite[\textbf{Lemma 4.1}}]{Bru1982})}
{\em Let $A$ be a set of atoms, $\mathcal{G}$ be a group of permutations of $A$ and the filter $\mathcal{F}$ of subgroups of $\mathcal{G}$ is generated by $\{${\em fix}$_{\mathcal{G}}(E):E\in [A]^{<\omega}\}$. Let $\mathcal{N}$ be the Fraenkel-Mostowski model determined by $A$, $\mathcal{G}$, and $\mathcal{F}$. If $\mathcal{N}$ satisfies Mostowski’s intersection lemma where $A$ is Dedekind-finite, then every set $x$ in $\mathcal{N}$ is either well-orderable or there exists an infinite subset of $A$, which embeds into $x$.}
\end{lem}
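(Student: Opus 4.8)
The plan is to work with minimal supports. Write $\mathcal I=[A]^{<\omega}$ for the ideal at hand. Since $\mathcal N$ satisfies Mostowski's intersection lemma, every $\sigma\in\mathcal N$ has a $\subseteq$-least support $E_\sigma\in\mathcal I$; I will freely use the standard facts that $E_{\phi(\sigma)}=\phi(E_\sigma)$ for $\phi\in\mathcal G$, that a finite set $F$ is a support of $\sigma$ iff $E_\sigma\subseteq F$, and that $E_\sigma\subseteq\sigma$ whenever $\sigma\in\mathcal I$. Fix $x\in\mathcal N$ with support $E$ and put $U:=\bigcup_{y\in x}E_y$. First I would note that $x$ is well-orderable in $\mathcal N$ iff $U$ is finite: if $U$ is finite then $\mathrm{fix}_{\mathcal G}(U\cup E)\subseteq\mathrm{fix}_{\mathcal G}(E_y)\subseteq \mathrm{sym}_{\mathcal G}(y)$ for each $y\in x$, so $\mathrm{fix}_{\mathcal G}(U\cup E)\subseteq\bigcap_{y\in x}\mathrm{sym}_{\mathcal G}(y)=\mathrm{fix}_{\mathcal G}(x)$ and $x$ is well-orderable by the criterion recalled above; conversely $\mathrm{fix}_{\mathcal G}(F)\subseteq\mathrm{fix}_{\mathcal G}(x)$ forces $E_y\subseteq F$ for all $y\in x$. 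Hence it suffices to treat the case where $x$ is not well-orderable, i.e.\ $U$ (which belongs to $\mathcal N$ with support $E$) is infinite, and to produce an infinite $B\subseteq A$ injecting into $x$ inside $\mathcal N$.

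The crux is the following rigidity fact, and this is where the intersection lemma does the work: for every $T\in\mathcal I$ and every $a\in A\setminus T$, the orbit $O$ of $a$ under $\mathrm{fix}_{\mathcal G}(T)$ is either the singleton $\{a\}$ or infinite. Suppose toward a contradiction that $1<|O|<\omega$. Every $\phi\in\mathrm{fix}_{\mathcal G}(T)$ permutes $A\setminus T$, so $O\cap T=\emptyset$; since $O$ is an orbit, $T$ supports $O$, and $O$ supports itself, so $E_O\subseteq O\cap T=\emptyset$ and $O$ is $\mathcal G$-invariant. Fixing any $b\in O$, the set $O\setminus\{b\}$ is then supported both by $\{b\}$ (because $\phi(b)=b$ and $\mathcal G$-invariance of $O$ give $\phi(O\setminus\{b\})=O\setminus\{b\}$) and by itself, so $E_{O\setminus\{b\}}\subseteq\{b\}\cap(O\setminus\{b\})=\emptyset$; thus $O\setminus\{b\}$, and therefore $\{b\}=O\setminus(O\setminus\{b\})$, is $\mathcal G$-invariant, i.e.\ $b$ is fixed by all of $\mathcal G$. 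This contradicts the transitivity of $\mathrm{fix}_{\mathcal G}(T)$ on $O$, since $|O|>1$. I expect establishing this fact cleanly to be the main obstacle; the remaining steps are routine manipulations of supports.

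Finally I would run a transport construction. Pick any $y_0\in x$ with $E_{y_0}\not\subseteq E$ (possible since $U\not\subseteq E$) and any $a_0\in E_{y_0}\setminus E$, put $T:=E\cup(E_{y_0}\setminus\{a_0\})\in\mathcal I$ and $H:=\mathrm{fix}_{\mathcal G}(T)$, and let $O$ be the $H$-orbit of $a_0$. If $O=\{a_0\}$, then $H$ fixes $a_0$, hence fixes $E_{y_0}$ pointwise, hence $T$ supports $y_0$ and $E_{y_0}\subseteq T$, forcing $a_0\in E$ --- a contradiction; so by the rigidity fact $O$ is infinite. For each $a\in O$ choose $\psi_a\in H$ with $\psi_a(a_0)=a$ and set $g(a):=\psi_a(y_0)$. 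I would then verify, in order: $g$ is well defined (if $\psi,\psi'\in H$ both send $a_0$ to $a$, then $\psi^{-1}\psi'$ fixes $T$ and $a_0$, hence $E_{y_0}$ pointwise, hence $y_0$); $g(a)\in x$ (since $H\subseteq\mathrm{fix}_{\mathcal G}(E)\subseteq\mathrm{sym}_{\mathcal G}(x)$); $g$ is injective (if $\psi_a(y_0)=\psi_{a'}(y_0)$ then $\psi_{a'}^{-1}\psi_a$ fixes $y_0$, so by the intersection lemma fixes $E_{y_0}$ setwise, and, fixing $E_{y_0}\setminus\{a_0\}$ pointwise, it then fixes $a_0$, whence $a=a'$); and $E\cup E_{y_0}$ is a support of $g$ (for $\phi\in\mathrm{fix}_{\mathcal G}(E\cup E_{y_0})\subseteq H$ one has $\phi(O)=O$, and $\phi\circ\psi_{\phi^{-1}(a)}$ is an admissible choice for $\psi_a$, so $\phi(g)=g$). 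Hence $g\in\mathcal N$ injects the infinite set $O\subseteq A$ into $x$; since $A$ is Dedekind-finite, $O$ is moreover Dedekind-finite, and we are done.
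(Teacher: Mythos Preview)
The paper does not supply its own proof of this lemma: it is stated with attribution to Brunner and a citation to \cite[Lemma~4.1]{Bru1982}, and is then used as a black box (notably in the proof of Theorem~4.2(2)). So there is nothing in the paper to compare your argument against directly.

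That said, your proof is correct and follows the standard line one expects from the intersection-lemma hypothesis. The characterisation of well-orderability via finiteness of $\bigcup_{y\in x}E_y$ is right (only the forward direction is actually needed, but both are fine). Your ``rigidity fact''---that $\mathrm{fix}_{\mathcal G}(T)$-orbits in $A\setminus T$ are singletons or infinite---is the key combinatorial point, and your proof of it via two nested applications of the intersection lemma (first for $O$, then for $O\setminus\{b\}$) is clean and complete. The transport construction is also sound: well-definedness, injectivity, range in $x$, and the support computation for $g$ all check out; the injectivity step correctly uses $E_{\eta(y_0)}=\eta(E_{y_0})$ to get setwise invariance of $E_{y_0}$, and then finiteness of $E_{y_0}$ to upgrade this to $\eta(a_0)=a_0$. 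The final remark that $O$ is Dedekind-finite is true but not needed for the statement as phrased.

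One cosmetic point: you invoke the choices $\psi_a$ for $a\in O$, which is legitimate since you are working in the ground model $M\models\mathsf{ZFA}+\mathsf{AC}$; the well-definedness argument then shows $g$ is independent of these choices and hence canonically attached to $(x,y_0,a_0)$, after which the support computation places $g\in\mathcal N$. You might make this explicit in a final write-up, but the logic is already correct.
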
 
\subsection{Diagram}  
We summarize the main results using the first definition of uncountable sets. Fix any $2\leq n\in\omega$. In Figure 1, known results are depicted with dashed arrows, new implications or non-implications in $\mathsf{ZF}$ are mentioned with simple black arrows, new non-implications in $\mathsf{ZFA}$ are mentioned with thick dotted black arrows.

\begin{figure}[!ht]
\begin{minipage}{\textwidth}
\begin{tikzpicture}[scale=6]
\draw (2.15,0.71) node[above] {$\mathsf{AC}$};
\draw[dashed, -triangle 60] (2.15,0.8) -- (2.15,1);
\draw (2.15,1) node[above] {$\mathsf{AC_{n}}$};
\draw[dashed, -triangle 60] (2.15,1.1) -- (1.85,1.26);
\draw[dashed, -triangle 60] (2.15,1.1) -- (2.45, 1.26);
\draw (1.85,1.25) node[above] {$\mathsf{AC_{n}^{-}}$};
\draw (2.45, 1.25) node[above] {``There are no amorphous sets"};

\draw[dashed, -triangle 60] (2.22,0.75) -- (2.37,0.75);
\draw (2.45,0.7) node[above] {$\mathsf{DC_{\aleph_{1}}}$};
\draw[dashed, -triangle 60] (2.45,0.8) -- (2.45,1);
\draw (2.45,1.02) node[above] {$\mathsf{DC}$};
\draw[dashed, -triangle 60] (2.45,1.1) -- (2.45,1.25);
\draw[-triangle 60] (2.52,0.75) -- (2.7,0.75);

\draw (2.9,0.7) node[above] {$\mathsf{EDM}$};

\draw[ultra thick, dotted, -triangle 60] (2.35,0.2) -- (2.85,0.7);
\draw[ultra thick, dotted, -triangle 60] (2.75,0.7) -- (2.25,0.2);
\draw (3.1,0.05) node[above] {$X$ if $X\in\{\mathsf{BPI},\mathsf{KW},\mathsf{AC_{WO}}, \mathcal{P}_{2}, \mathsf{DT},\mathsf{WOAM}, $ ``Antichain Principle $(\mathsf{A})$", $\mathsf{CS}$, $\mathsf{MC}\}$};

\draw[thick] (1.83,0.05) rectangle (4.38,0.18);

\draw(2.6,0.42) -- (2.6,0.48);
\draw(2.5,0.42) -- (2.5,0.48);

\draw[-triangle 60] (2.9,0.35) -- (2.9,0.7);
\draw (2.9,0.25) node[above] {$\mathsf{RT}$+$\mathsf{WOAM}$};

\draw[thick] (2.7,0.25) rectangle (3.09,0.35);

\draw[dashed, -triangle 60] (3.3,0.35) -- (3.3,0.47);
\draw[dashed, -triangle 60] (3.08,0.3) -- (3.2,0.3);
\draw (3.4,0.25) node[above] {$\mathsf{CAC}$+$\mathsf{WOAM}$};
\draw[thick] (3.2,0.25) rectangle (3.63,0.35);

\draw[-triangle 60] (3.38,0.35) [bend right=60] to (3.38,0.7);
\draw[ultra thick, dotted, -triangle 60] (2.9,0.8) -- (2.5, 1.25);
\draw (2.8,0.95) -- (2.76,0.9);
\draw[-triangle 60]  (2.45, 1.02) -- (2.71,0.8);

\draw (2.57,0.95) -- (2.53,0.9);

\draw[-triangle 60] (2.9,0.8) -- (2.9,1);
\draw[-triangle 60] (2.93,1) -- (3.3,0.8);
\draw (3.07,0.95) -- (3.03,0.9);

\draw[dashed,-triangle 60] (2.52,1.05) -- (2.8,1.05);
\draw (2.92,1.02) node[above] {$\mathsf{RT}$};
\draw[dashed,-triangle 60] (3,1.05) -- (3.25,1.05);

\draw[-triangle 60] (3.35,0.8) -- (3.35,1.02);
\draw[dashed,-triangle 60] (3.38,0.55) [bend right=60] to (3.38,1);

\draw (3.35,1.02) node[above] {$\mathsf{CAC}$};
\draw[-triangle 60] (3.1,0.76) -- (3.25,0.76);
\draw[dashed,-triangle 60] (3.45,1.05) -- (3.72,1.05);
\draw (4.13,1) node[above] {$\mathsf{AC_{fin}^{\aleph_{0}}}\sim$ ``K\H{o}nig's Lemma"};

\draw (3.2,1.25) node[above] {$\mathsf{CACT_{1}^{\aleph_{0}}}$};
\draw (3.6,1.26) node[above] {$\mathsf{CACT^{\aleph_{0}}}$};
\draw[ultra thick, dotted, -triangle 60] (3.55,1.27) -- (3.35,1.12);
\draw[ultra thick, dotted, -triangle 60] (3.15,1.27) -- (3.35,1.12);
\draw (3.35,1.45) node[above] {$\mathsf{AC_{fin}^{\aleph_{0}}}$};

\draw[-triangle 60] (3.35,1.45)-- (3.55,1.35);
\draw[-triangle 60] (3.35,1.45) -- (3.15,1.35);
\draw (3.23,1.25) -- (3.17,1.21);
\draw (3.46,1.25) -- (3.51,1.21);
\draw[ultra thick, dotted, -triangle 60] (3.25,0.79) -- (3.09,0.79);
\draw (3.18,0.77) -- (3.2,0.81);
\draw (3.35,0.71) node[above] {$\mathsf{CAC^{\aleph_{0}}}$};

\draw[-triangle 60] (3.1,0.76) -- (3.25,0.55);
\draw[ultra thick, dotted, -triangle 60] (3.23,0.54) -- (3.08,0.74);
\draw (3.18,0.57) -- (3.2,0.61);
\draw (3.35,0.45) node[above] {$\mathsf{CAC_{1}^{\aleph_{0}}}$};

\draw[ultra thick, dotted, -triangle 60] (3.3,0.55) -- (3.3,0.72);
\draw (3.28,0.60) -- (3.32,0.64);

\draw[-triangle 60] (3.45,0.76) -- (3.78,0.76);
\draw (3.9,0.7) node[above] {$\mathsf{AC_{\aleph_{0}}^{\aleph_{0}}}$};

\draw[dashed,-triangle 60] (3.45,0.5) -- (3.78,0.5);
\draw (3.9,0.45) node[above]{$\mathsf{PAC_{fin}^{\aleph_{1}}}$} ;
\end{tikzpicture}
\end{minipage}
\caption{\em In the above figure, we denote ``equivalent to" by $\sim$. We note that $(\forall n\geq 3)$ $\mathcal{P}_{n}\sim \mathsf{BPI}$.  
}
\end{figure}
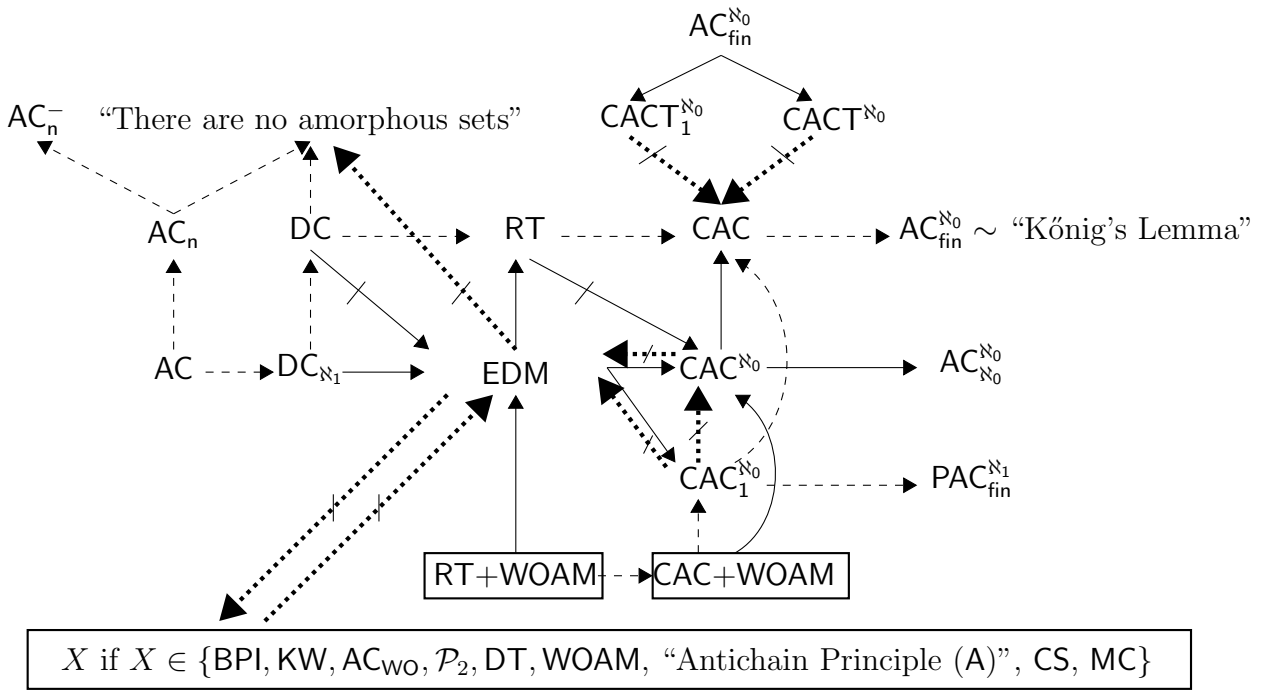
\section{Known and basic results}

\subsection{Known results}

\begin{fact}{($\mathsf{ZF}$)} {\em The following hold:
\begin{enumerate}
    \item $\mathsf{RT}$ holds for every infinite well-orderable set and if $\mathsf{RT}$ holds for an infinite set $Y$, then $\mathsf{RT}$ holds for any set $X \supseteq Y$ \cite[Theorem 1.7]{Tac2016a} and $\mathsf{DF=F}$ implies $\mathsf{RT}$ \cite{HR1998}.
    
    \item $\mathsf{WOAM}$ implies $\mathsf{CUT}$ \cite[Proposition 8(i)]{KTW2021}. So, $\mathsf{WOAM}$ implies ``$\aleph_{1}$ is regular".
    \item $\mathsf{CAC_{1}^{\aleph_{0}}}$ implies $\mathsf{CAC}$ \cite[Theorem 4(11)]{Tac2022} and $\mathsf{CAC}$ implies $\mathsf{AC_{fin}^{\aleph_{0}}}$ \cite[Lemma 4.4]{Tac2019a}.
    \item $\mathsf{WOAM}+\mathsf{CAC}$ implies $\mathsf{CAC^{\aleph_{0}}_{1}}$ \cite[Theorem 8(1)]{Tac2022}.
    \item $\mathsf{CAC^{\aleph_{0}}_{1}}$ implies $\mathsf{PAC^{\aleph_{1}}_{fin}}$ and $\mathsf{DC}$ does not imply  $\mathsf{CAC^{\aleph_{0}}_{1}}$ \cite[Theorem 4.5, Corollary 4.6]{Ban2}.
\end{enumerate}
}
\end{fact}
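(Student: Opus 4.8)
The plan is essentially bookkeeping: each of the five clauses is either proved outright in the reference attached to it or is obtained from that reference by a one-line reduction, so I would present the argument clause by clause, reproducing only the genuinely short reductions for the reader's convenience.

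For clause (1), I would first recall that the classical Ramsey construction is choice-free over a well-ordered domain: fixing a subset of $A$ of order type $\omega$ under a well-ordering of $A$, the usual recursion (at stage $n$ let $a_{n}$ be the least surviving element, split the survivors above $a_{n}$ according to the colour of the pair with $a_{n}$, retain the first of the two halves that is infinite, and record that colour $\epsilon_{n}$) selects every object canonically from the colouring and the well-ordering, so no choice is used; the colour taken infinitely often among the $\epsilon_{n}$ then yields an infinite monochromatic set. The monotonicity clause is immediate: given $X\supseteq Y$ with $\mathsf{RT}$ for $Y$ and any $f\colon[X]^{2}\to 2$, restrict $f$ to $[Y]^{2}$, extract an infinite $f$-monochromatic $B\subseteq Y$, and observe $B\subseteq X$. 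Finally, $\mathsf{DF=F}\Rightarrow\mathsf{RT}$ follows by combining the two: under $\mathsf{DF=F}$ any infinite $X$ is Dedekind-infinite, hence contains a countably infinite --- so well-orderable --- subset $Y$, whence $\mathsf{RT}$ holds for $Y$ and therefore for $X$. All of this is due to \cite[Theorem 1.7]{Tac2016a} and \cite{HR1998}, and I would just reproduce the reduction.

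For clause (2), I would quote $\mathsf{WOAM}\Rightarrow\mathsf{CUT}$ from \cite[Proposition 8(i)]{KTW2021} and then derive regularity of $\aleph_{1}$: were $\aleph_{1}$ singular we would have $\mathrm{cf}(\omega_{1})=\omega$, so $\omega_{1}$ would be the union of a countable family of countable ordinals, contradicting $\mathsf{CUT}$. Clauses (3)--(5) are verbatim: $\mathsf{CAC_{1}^{\aleph_{0}}}\Rightarrow\mathsf{CAC}$ from \cite[Theorem 4(11)]{Tac2022}, $\mathsf{CAC}\Rightarrow\mathsf{AC_{fin}^{\aleph_{0}}}$ from \cite[Lemma 4.4]{Tac2019a}, $\mathsf{WOAM}+\mathsf{CAC}\Rightarrow\mathsf{CAC_{1}^{\aleph_{0}}}$ from \cite[Theorem 8(1)]{Tac2022}, and $\mathsf{CAC_{1}^{\aleph_{0}}}\Rightarrow\mathsf{PAC_{fin}^{\aleph_{1}}}$ together with the non-implication $\mathsf{DC}\not\Rightarrow\mathsf{CAC_{1}^{\aleph_{0}}}$ from \cite[Theorem 4.5, Corollary 4.6]{Ban2}; nothing needs to be added. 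Since everything of substance is quoted, there is no real obstacle here; the only point meriting a line of care is the assertion in (1) that the Ramsey recursion is genuinely choice-free over a well-orderable set --- which it is, because ``the least surviving element'' and ``the first infinite half'' are definable from the data --- together with the trivial observation that the reduction from $Y$ to $X\supseteq Y$ respects the two-colour restriction, being just a restriction of the colouring.
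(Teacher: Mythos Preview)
Your proposal is correct and matches the paper's treatment: Fact~3.1 is stated in the paper purely as a collection of known results with citations and is given no proof there, so your clause-by-clause bookkeeping with the short reductions spelled out (the choice-free Ramsey recursion over a well-order, the monotonicity, the $\mathsf{DF=F}$ reduction, and the $\mathsf{CUT}\Rightarrow\text{``}\aleph_{1}\text{ is regular''}$ step) is exactly in the intended spirit and in fact supplies more detail than the paper itself.
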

We recall the following result communicated to us by Tachtsis from \cite{Ban2}.

\begin{fact}(cf. \cite[Lemma 4.1, Corollary 4.2]{Ban2})
{\em $(\mathsf{CAC_{1}^{\aleph_{0}}})'$ holds in any permutation model.}
\end{fact}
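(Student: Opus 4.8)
The plan is to reduce the statement to the classical theorem of Kurepa in $\mathsf{ZFC}$ by passing to the \emph{pure kernel} of the permutation model. Fix a model $M$ of $\mathsf{ZFA}+\mathsf{AC}$ with a set $A$ of atoms, a group $\mathcal{G}$ of permutations of $A$, a normal filter $\mathcal{F}$ of subgroups of $\mathcal{G}$, and let $\mathcal{N}$ be the associated permutation model. Let $M_{0}$ be the class of hereditarily pure sets of $M$, i.e. those $x$ with $\mathsf{TC}(\{x\})\cap A=\emptyset$. First I would record three routine facts: (i) $M_{0}\subseteq\mathcal{N}$, since for a hereditarily pure $x$ every $t\in\mathsf{TC}(\{x\})$ satisfies $sym_{\mathcal{G}}(t)=\mathcal{G}\in\mathcal{F}$; (ii) conversely every hereditarily pure set of $\mathcal{N}$ already lies in $M_{0}$; and (iii) $M_{0}\models\mathsf{ZFC}$, since it is transitive, contains all ordinals, is closed under the $\mathsf{ZF}$ operations, and a choice function for a family in $M_{0}$ of nonempty sets is itself hereditarily pure, so $\mathsf{AC}$ descends from $M$. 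In particular, for every ordinal $\kappa$ we get $\mathcal{P}(\kappa)^{\mathcal{N}}=\mathcal{P}(\kappa)^{M_{0}}$, and likewise any subset of $\kappa\times\kappa$, and any injection of a subset of $\kappa$ into $\omega$, belongs to $\mathcal{N}$ if and only if it belongs to $M_{0}$ — all such objects being hereditarily pure.

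Now let $(P,\leq)\in\mathcal{N}$ be a poset with $P$ well-orderable in $\mathcal{N}$, all of whose antichains are finite and all of whose chains are countable. Working in $\mathcal{N}$, fix an ordinal $\kappa$ and a bijection $b:\kappa\to P$, and set $R=\{(\alpha,\beta)\in\kappa\times\kappa : b(\alpha)\leq b(\beta)\}$, so that $b$ is an isomorphism of $(\kappa,R)$ onto $(P,\leq)$. Since $R$ is hereditarily pure, $R\in M_{0}$. By the absoluteness noted above, in $M_{0}$ the poset $(\kappa,R)$ again has all antichains finite and all chains countable: a chain or antichain of $(\kappa,R)$ is a subset of $\kappa$, hence the same object in $\mathcal{N}$ and in $M_{0}$; ``finite'' is absolute; and ``countable'' is witnessed by an injection into $\omega$, which is hereditarily pure and hence common to both models. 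Applying the classical theorem of Kurepa (which holds in $\mathsf{ZFC}$, following for instance from the Erd\H{o}s--Dushnik--Miller theorem applied to the comparability colouring of $[\kappa]^{2}$), $(\kappa,R)$ is countable in $M_{0}$; fix an injection $g:\kappa\to\omega$ in $M_{0}$. Then $g$ is hereditarily pure, so $g\in\mathcal{N}$, whence $\kappa$ — and therefore $P$, via $g\circ b^{-1}$ — is countable in $\mathcal{N}$. Thus $(\mathsf{CAC_{1}^{\aleph_{0}}})'$ holds in $\mathcal{N}$.

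The only delicate point is the absoluteness step of the second paragraph, i.e. checking that ``antichain'', ``chain'', ``finite'' and ``countable'' for the coded poset $(\kappa,R)$ genuinely agree in $\mathcal{N}$ and $M_{0}$; this reduces entirely to the observation that each relevant witness (a subset of $\kappa$, or an injection of such a subset into $\omega$) is hereditarily pure, hence sits in $\mathcal{N}$ exactly when it sits in $M_{0}$. Everything else is bookkeeping about the pure kernel together with the $\mathsf{ZFC}$ theorem of Kurepa; and since the group $\mathcal{G}$ and the filter $\mathcal{F}$ enter only through $\mathcal{G}\in\mathcal{F}$, the argument is uniform over all permutation models. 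This is in fact an instance of the general phenomenon that a $\mathsf{ZFC}$-theorem concerning only well-orderable structures transfers to every permutation model.
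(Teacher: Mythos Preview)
Your proof is correct but follows a different route from the one the paper cites. The argument of \cite[Lemma~4.1, Corollary~4.2]{Ban2}, a variant of which the paper spells out for the analogous statement $\mathsf{EDM}'$ in the proof of Proposition~3.3(1), works entirely inside $\mathsf{ZF}$: one shows by a direct transfinite construction along the given well-ordering that ``$\aleph_{1}$ is regular'' already implies $(\mathsf{CAC_{1}^{\aleph_{0}}})'$, and then invokes the standing fact that $\aleph_{1}$ is regular in every permutation model. You instead code the well-orderable poset as a pure object, pass to the pure kernel $M_{0}\models\mathsf{ZFC}$, apply Kurepa's theorem there as a black box, and return via absoluteness of the relevant witnesses. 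The paper's route has the advantage of isolating a concrete $\mathsf{ZF}$ hypothesis (regularity of $\aleph_{1}$) that suffices, which is informative also in $\mathsf{ZF}$-models that are not permutation models; your route is more structural and, as you observe at the end, instantiates a uniform transfer principle whereby any $\mathsf{ZFC}$ theorem phrased solely about well-orderable structures holds in every permutation model.
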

\subsection{Basic propositions}
\begin{prop} 
{\em The following hold:
\begin{enumerate}
    \item ``$\aleph_{1}$ is regular" implies $\mathsf{EDM'}$ in $\mathsf{ZF}$.

    \item ``$\aleph_{1}$ is regular" implies $(\mathsf{CAC^{\aleph_{0}}})'$, $(\mathsf{CACT_{1}^{\aleph_{0}}})'$ as well as $(\mathsf{CACT^{\aleph_{0}}})'$ in $\mathsf{ZF}$.
    \item ``$\aleph_{1}$ is regular"$+$ $\mathsf{AC_{fin}^{\aleph_{0}}}$ implies $\mathsf{CACT_{1}^{\aleph_{0}}}$ and $\mathsf{CACT^{\aleph_{0}}}$ in $\mathsf{ZF}$.
    \item $X$ holds in any permutation model if $X\in \{\mathsf{EDM'},(\mathsf{CAC^{\aleph_{0}}})', (\mathsf{CACT_{1}^{\aleph_{0}}})',(\mathsf{CACT^{\aleph_{0}}})'\}$.
    
    \item  $\mathsf{AC_{fin}^{\aleph_{0}}}$ implies $\mathsf{CACT_{1}^{\aleph_{0}}}$ and $\mathsf{CACT^{\aleph_{0}}}$ in $\mathsf{ZF}$. 
\end{enumerate}
}
\end{prop}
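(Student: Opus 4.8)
The plan is to derive part (1) from a choice-free instance of the Erd\H{o}s--Dushnik--Miller partition relation --- that ``$\aleph_{1}$ is regular'' implies $\omega_{1}\to(\omega_{1},\omega)^{2}$, i.e.\ for every $c:[\omega_{1}]^{2}\to\{0,1\}$ there is an uncountable $0$-homogeneous set or a countably infinite $1$-homogeneous set --- and then to read off parts (2)--(5) from part (1) together with elementary facts about $\omega$-trees and permutation models.

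For part (1), since a well-orderable uncountable set has cardinality $\geq\aleph_{1}$ and hence contains a subset $W$ of order type $\omega_{1}$, transferring a given coloring of $[V_{G}]^{2}$ to $[\omega_{1}]^{2}$ along $W\cong\omega_{1}$ reduces everything to proving $\omega_{1}\to(\omega_{1},\omega)^{2}$. So fix $c$ with no countably infinite $1$-homogeneous set. The crux is the sub-claim: \emph{if $X\subseteq\omega_{1}$ has $|X|=\aleph_{1}$ and $c\restriction[X]^{2}$ has no $\aleph_{1}$-sized $0$-homogeneous subset, then some $a\in X$ has $\aleph_{1}$-many $1$-neighbours in $X$.} I would prove it by building, along the induced well-order of $X$ (so with no choice), a $\subseteq$-maximal $0$-homogeneous $A\subseteq X$: by hypothesis $|A|<\aleph_{1}$, so $A$ is a countable subset of $\omega_{1}$ and, by regularity, $\sup A<\omega_{1}$, whence $X':=\{x\in X:x>\sup A\}$ still has size $\aleph_{1}$ (else $X$, a union of the two countable sets $X\cap(\sup A+1)$ and $X'$, would be countable); maximality of $A$ forces each $z\in X'$ to be $1$-joined to some $a\in A$, and mapping $z$ to the well-order-least such $a$ gives $h:X'\to A$ with countably many fibres; if all fibres were countable they would be bounded in $\omega_{1}$ (regularity), hence so would $X'=\bigcup_{a\in A}h^{-1}(a)$ (regularity again), contradicting $|X'|=\aleph_{1}$, so some $h^{-1}(a^{\ast})$ has size $\aleph_{1}$. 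Granting the sub-claim, suppose toward a contradiction that $c\restriction[\omega_{1}]^{2}$ has no $\aleph_{1}$-sized $0$-homogeneous subset; this passes to every $\aleph_{1}$-sized subset, so iterating the sub-claim from $Y_{0}=\omega_{1}$ produces $a_{0},a_{1},\dots$ and $\omega_{1}=Y_{0}\supseteq Y_{1}\supseteq\cdots$ with $a_{i}\in Y_{i}\setminus Y_{i+1}$, $|Y_{i+1}|=\aleph_{1}$, and $c(\{a_{i},y\})=1$ for all $y\in Y_{i+1}$; then $\{a_{i}:i<\omega\}$ is a countably infinite $1$-homogeneous set, a contradiction. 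This $\omega$-length recursion only ever picks a definable object attached to a subset of $\omega_{1}$, so $\mathsf{ZF}$ suffices and no limit stage intervenes. Hence an $\aleph_{1}$-sized (so uncountable) $0$-homogeneous set exists, giving $\mathsf{EDM'}$.

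For part (2): $(\mathsf{CACT^{\aleph_{0}}})'$ is a special case of $(\mathsf{CAC^{\aleph_{0}}})'$, and $(\mathsf{CACT_{1}^{\aleph_{0}}})'$ is in fact immediate in $\mathsf{ZF}$ --- every $\omega$-tree has all levels finite (induction from the origin, using local finiteness), so an $\omega$-tree on a well-orderable vertex set is a well-orderable union of $\omega$ finite sets and hence countable (its chain/antichain hypotheses being superfluous). For $(\mathsf{CAC^{\aleph_{0}}})'$: given a poset $(P,\leq)$ on a well-orderable set with all chains finite and all antichains countable, if $P$ were uncountable then $\mathsf{EDM'}$ applied to the graph on $P$ with ``$f(\{x,y\})=1$ iff $x,y$ are comparable'' would yield an uncountable $0$-homogeneous set (an uncountable antichain) or a countably infinite $1$-homogeneous set (a countably infinite chain), each contradicting the hypotheses, so $P$ is countable. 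For part (4), any permutation model $\mathcal{N}$ satisfies ``$\aleph_{1}$ is regular'': $\mathcal{N}$ has the same pure sets --- in particular the same ordinals and the same $\omega$-sequences of ordinals --- as its ground model $M\models\mathsf{ZFA+AC}$, so $\omega_{1}^{\mathcal{N}}=\omega_{1}^{M}$ and a cofinal $\omega$-sequence in it would lie in $M$, contradicting regularity of $\omega_{1}$ in $M$; thus parts (1) and (2) yield all four primed statements in $\mathcal{N}$.

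For parts (3) and (5), the content is that $\mathsf{AC_{fin}^{\aleph_{0}}}$ implies every $\omega$-tree is countable, so that $\mathsf{CACT_{1}^{\aleph_{0}}}$ and $\mathsf{CACT^{\aleph_{0}}}$ follow a fortiori (being this statement restricted to $\omega$-trees meeting extra hypotheses). Indeed, the levels $L_{n}$ of an $\omega$-tree are non-empty and finite, so the family whose $n$-th member is the (finite, non-empty) set of bijections $L_{n}\to|L_{n}|$ is a denumerable family of non-empty finite sets; a choice function for it, given by $\mathsf{AC_{fin}^{\aleph_{0}}}$, produces an injection $\bigcup_{n}L_{n}\hookrightarrow\omega\times\omega$. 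Part (5) is exactly this, and part (3) follows (also from part (2), since $\mathsf{AC_{fin}^{\aleph_{0}}}$ then makes the vertex set of an $\omega$-tree well-orderable). The one substantial point is part (1): running the Dushnik--Miller construction inside $\mathsf{ZF}$ by keeping every selection canonical (made from a subset of $\omega_{1}$) and isolating that regularity of $\aleph_{1}$ is used exactly twice --- to retain an $\aleph_{1}$-sized reservoir after discarding the bounded part below $\sup A$, and to stop every fibre of $h$ from being countable --- while never appealing to it at a limit, since the $1$-homogeneous set is completed in only $\omega$ steps. Everything else is routine bookkeeping.
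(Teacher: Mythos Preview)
Your proof is correct and follows essentially the same Dushnik--Miller approach as the paper for part~(1): build a maximal $0$-homogeneous set using the given well-order, invoke regularity of $\aleph_1$ to find a vertex with uncountably many $1$-neighbours, and iterate to produce an infinite $1$-homogeneous set, contradicting the hypothesis. Your treatment is somewhat more explicit (you reduce to $\omega_1$ first and isolate the two uses of regularity), and you add the small sharpening that the primed tree statements already hold in $\mathsf{ZF}$ without ``$\aleph_1$ is regular'', but the substance matches the paper's proof, which handles parts~(2)--(5) with the same ingredients you use.
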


\begin{proof}
(1). We modify the arguments due to Tachtsis from \cite[Lemma 4.1]{Ban2}. Let $G=(V_{G}, E_{G})$ be a graph based on a well-ordered set of vertices. Fix a well-ordering $\preceq$ of $V_{G}$. Let $f:[V_{G}]^{2}\rightarrow \{0,1\}$ be a coloring such that all sets monochromatic in color $0$ are countable and all sets monochromatic in color $1$ are finite.  
By way of contradiction, assume that $V_{G}$ is uncountable.
We construct an infinite set monochromatic in color $1$ in $G$ to obtain a contradiction. 
Since $V_{G}$ is well-ordered by $\preceq$, we can construct (via transfinite induction) a maximal set monochromatic in color $0$, $C_0$ say, without invoking any form of choice. Since $C_0$ is countable, it follows that $V_{G}-C_{0}$ is uncountable and for every vertex $v\in V_{G}-C_{0}$, there is $c\in C_0$ such that $f(\{v,c\})=1$. We write $V_{G}-C_{0} = \bigcup\{W_{p} : p\in C_{0}\}$, where $W_{p}=\{v\in V_{G}-C_{0}:f(\{v,p\})=1\}$. Since $V_{G}-C_{0}$ is uncountable and $C_0$ is countable, it follows by “$\aleph_{1}$ is regular” that $W_{p}$ is uncountable for some $p$ in $C_{0}$. Let $p_0$ be the least (with respect to $\preceq$) such vertex of $C_0$. Next, we construct a maximal set monochromatic in color $0$ in (the uncountable set) $W_{p_{0}}$, $C_1$ say, and let  (similarly to the above argument) $p_1$ be the least (with respect to $\preceq$) vertex of $C_1$ such that the set $W_{p_{1}}=\{v\in W_{p_{0}}-C_{1}: f(\{v,p_{1}\})=1\}$ is uncountable. Continuing this process step by step and noting that the process cannot stop at a finite stage, we obtain a countably infinite set of vertices $\{p_{n}: n\in\omega\}$ monochromatic in color $1$, contradicting the assumption that all sets monochromatic in color 1 are finite. Therefore, $V_{G}$ is countable.

(2--5). Follows from (1) and the fact that the statement “$\aleph_{1}$ is a regular cardinal” holds in every permutation model (cf. \cite[Corollary 1]{HKRST2001}) and $\mathsf{AC_{fin}^{\aleph_{0}}}$ is equivalent to ``Every $\omega$-tree is countable" in $\mathsf{ZF}$.
\end{proof}

\begin{prop}
($\mathsf{ZF}$)
{\em $\mathsf{CAC^{\aleph_{0}}}$ implies $\mathsf{AC^{\aleph_{0}}_{\aleph_{0}}}$.}
\end{prop}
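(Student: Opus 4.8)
The plan is to prove the \emph{partial} form $\mathsf{PAC_{\aleph_0}^{\aleph_0}}$ (\textbf{Form 32B}): every denumerable family of denumerable sets has an infinite subfamily with a choice function; this suffices, since the excerpt records that $\mathsf{PAC_{\aleph_0}^{\aleph_0}}$ is equivalent to $\mathsf{AC_{\aleph_0}^{\aleph_0}}$. So fix a denumerable family $\{A_n:n\in\omega\}$ of denumerable sets. Replacing each $A_n$ by $\{n\}\times A_n$, we may assume the $A_n$ are pairwise disjoint: a choice function for (an infinite subfamily of) the new family immediately yields one for the corresponding subfamily of the original, and disjointification needs no choice.

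Next I would introduce the ``layered'' poset $P=\bigcup_{n\in\omega}A_n$, where for $x\in A_m$ and $y\in A_n$ we declare $x\le_P y$ iff $x=y$ or $m<n$. This is a well-defined partial order precisely because the $A_n$ are pairwise disjoint, so each $x\in P$ determines a unique index $n(x)$ with $x\in A_{n(x)}$. The crucial structural fact is that elements lying in distinct layers are always comparable, so any antichain of $P$ is contained in a single layer $A_n$ and is therefore countable. Thus $P$ is an infinite poset all of whose antichains are countable, and hence $\mathsf{CAC^{\aleph_0}}$ applies to $P$ as soon as we know all of its chains are finite.

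I would then argue by the dichotomy supplied by $\mathsf{CAC^{\aleph_0}}$: either $P$ has an infinite chain, or every chain of $P$ is finite and then $P$ is countable. In the first case, let $C\subseteq P$ be an infinite chain; since two elements of the same layer are incomparable, the map $x\mapsto n(x)$ is injective on $C$, so $S:=\{n(x):x\in C\}$ is infinite, and for each $n\in S$ there is a unique $c_n\in C\cap A_n$. Then $n\mapsto c_n$ is a choice function for the infinite subfamily $\{A_n:n\in S\}$, as required. In the second case, fix a well-ordering of the countable set $P$; the $\le_P$-least element of each (non-empty) layer $A_n$ gives a choice function for the \emph{entire} family $\{A_n:n\in\omega\}$, a fortiori for an infinite subfamily. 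Either way we obtain an infinite subfamily with a choice function, establishing $\mathsf{PAC_{\aleph_0}^{\aleph_0}}$ and hence $\mathsf{AC_{\aleph_0}^{\aleph_0}}$.

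I do not anticipate a serious obstacle; the proof uses only finite choice throughout. The two points that require care are (i) the disjointification step, which is what makes the index $n(x)$ and hence the order $\le_P$ well defined, and (ii) the decision to target only the partial form $\mathsf{PAC_{\aleph_0}^{\aleph_0}}$ in the ``infinite chain'' branch: an infinite chain of $P$ meets only one layer per index in $S$, and $\omega\setminus S$ may well be infinite, so one genuinely cannot extract a choice function for the whole family directly from it — this is exactly why invoking \textbf{Form 32A} $\iff$ \textbf{Form 32B} is the natural move.
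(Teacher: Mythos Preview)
Your argument is correct and is essentially the paper's proof, just reorganized as a direct dichotomy rather than a proof by contradiction: the paper assumes no partial choice function, deduces that all chains in the same layered poset are finite, applies $\mathsf{CAC^{\aleph_0}}$ to get $P$ countable, and reaches a contradiction. One slip to fix: in your second branch you write ``the $\le_P$-least element of each layer $A_n$'', but each $A_n$ is an antichain in $\le_P$; you of course mean the least element with respect to the well-ordering of $P$ you just fixed.
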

\begin{proof}
Since $\mathsf{AC^{\aleph_{0}}_{\aleph_{0}}}$ is equivalent to its partial version $\mathsf{PAC^{\aleph_{0}}_{\aleph_{0}}}$ (cf. Definition 2.3), it suffices to show $\mathsf{PAC^{\aleph_{0}}_{\aleph_{0}}}$. Let $\mathcal{A} = \{A_{i}
:i\in \omega\}$ be a denumerable family of non-empty, denumerable sets. Without loss of generality, assume that $\mathcal{A}$ is disjoint. For the sake of contradiction, we assume that $\mathcal{A}$ has no partial choice function. Deﬁne a binary relation $\leq$ on $A =\bigcup \mathcal{A}$ as follows: for all $a,b \in A$, let $a\leq b$ if and only if $a = b$ or $a \in A_{n}$, $b \in A_{m}$ and $n < m$. 
Clearly, $\leq$ is a partial order on $A$. Since any two elements of $A$ are $\leq$-comparable if and only if they belong to distinct $A_{i}$'s, and $\mathcal{A}$ has no partial choice function, all chains in $(A,\leq)$ are finite. Next, if $C \subset A$ is an antichain in $(A,\leq)$, then $C \subseteq A_{i}$ for some $i \in \omega$. Thus, all antichains in $(A,\leq)$ are countable as $A_{i}$ is denumerable for all $i\in \omega$. By $\mathsf{CAC^{\aleph_{0}}}$, $A$ is countable (and hence well-orderable), contradicting $\mathcal{A}$'s having no partial choice function. 
\end{proof}

\begin{prop}{\em Let $A$ be a set of atoms. Let $\mathcal{G}$ be the group of permutations of A such that either each $\eta\in \mathcal{G}$ moves only finitely many atoms or there exists an $n\in\omega\backslash \{0,1\}$, such that for all $\eta\in \mathcal{G}$, $\eta^{n}=1_{A}$. Let $\mathcal{N}$ be the permutation model determined by $A$, $\mathcal{G}$, and a normal filter $\mathcal{F}$ of subgroups of $\mathcal{G}$. Then the following hold:
\begin{enumerate}
    \item The Antichain Principle $\mathsf{A}$ holds in $\mathcal{N}$.
    \item If $\mathsf{WUT}$ holds in $\mathcal{N}$, then both $\mathsf{CAC^{\aleph_{0}}}$ and $\mathsf{CAC_{1}^{\aleph_{0}}}$ hold in $\mathcal{N}$.
    \item If $\mathsf{AC^{WO}_{fin}}$ holds and $\mathsf{AC_{\aleph_{0}}^{\aleph_{0}}}$ fails in $\mathcal{N}$, then $\mathsf{CAC_{1}^{\aleph_{0}}}$ holds and $\mathsf{CAC^{\aleph_{0}}}$ fails in $\mathcal{N}$.
\end{enumerate}
}
\end{prop}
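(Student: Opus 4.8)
The plan is to analyse the action of the symmetry group of a given poset on its underlying set, exploiting that, under either alternative imposed on $\mathcal{G}$, every $\eta\in\mathcal{G}$ has finite order: a permutation with finite support has finite order, and a permutation with $\eta^{n}=1_{A}$ has order dividing $n$. Fix a poset $(P,\leq)\in\mathcal{N}$ and put $H=sym_{\mathcal{G}}((P,\leq))\in\mathcal{F}$, so that each $\eta\in H$ acts on $P$ as an order-automorphism of finite order. The first structural fact I would prove is that \emph{every $H$-orbit $o\subseteq P$ is an antichain}: if $\eta(p)<\theta(p)$ for $\eta,\theta\in H$ and $p\in P$, then applying $\eta^{-1}$ gives $p<\rho(p)$ with $\rho=\eta^{-1}\theta\in H$ of some finite order $m$, and iterating the order-automorphism $\rho$ yields $p<\rho(p)<\cdots<\rho^{m}(p)=p$, a contradiction. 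The second is that \emph{the set $O=P/H$ of $H$-orbits is well-orderable in $\mathcal{N}$}: since $\eta H=H$ for $\eta\in H$, every $\eta\in H$ fixes each $H$-orbit, so $H\subseteq\mathrm{fix}_{\mathcal{G}}(O)$, hence $\mathrm{fix}_{\mathcal{G}}(O)\in\mathcal{F}$ and the well-orderability criterion (Lemma~2.5: a set in $\mathcal{N}$ is well-orderable iff its pointwise stabiliser lies in $\mathcal{F}$) applies. Here $O$, and each $o\in O$, lie in $\mathcal{N}$ because $H\subseteq sym_{\mathcal{G}}(o)$ and $H\subseteq sym_{\mathcal{G}}(O)$.

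For (1), I would fix a well-ordering $O=\{o_{\xi}:\xi<\lambda\}$ in $\mathcal{N}$ and define, by transfinite recursion inside $\mathcal{N}$, the sets $B_{\xi}=\{p\in o_{\xi}: p \text{ is incomparable to every element of } \bigcup_{\zeta<\xi}B_{\zeta}\}$. Since each $B_{\xi}$ is a subset of the antichain $o_{\xi}$, one checks directly that $A=\bigcup_{\xi<\lambda}B_{\xi}$ is an antichain: two elements of the same $B_{\xi}$ are incomparable, and an element of $B_{\zeta}$ is incomparable to every earlier $B_{\xi}$ by construction. For maximality, if some $p\in P\setminus A$ were incomparable to all of $A$, then, picking $\xi$ with $p\in o_{\xi}$, $p$ would be incomparable to all of $\bigcup_{\zeta<\xi}B_{\zeta}\subseteq A$, hence $p\in B_{\xi}\subseteq A$, a contradiction. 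So $A$ is a maximal antichain of $(P,\leq)$ lying in $\mathcal{N}$, and $\mathsf{A}$ holds in $\mathcal{N}$.

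For (2) and (3), let $(P,\leq)\in\mathcal{N}$ be a poset of the relevant type; as above its $H$-orbits partition $P$ into antichains, so if all antichains of $(P,\leq)$ are finite then every orbit is finite, and if all antichains are countable then every orbit is countable, hence well-orderable. For (2): $\mathsf{WUT}$ applied to the well-orderable family $O$ of well-orderable orbits shows $P$ is well-orderable, and then $(\mathsf{CAC_{1}^{\aleph_{0}}})'$ (Fact~3.2), respectively $(\mathsf{CAC^{\aleph_{0}}})'$ (Proposition~3.3(4)), gives that $P$ is countable; hence $\mathsf{CAC^{\aleph_{0}}}$ and $\mathsf{CAC_{1}^{\aleph_{0}}}$ both hold in $\mathcal{N}$. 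For (3): here every orbit is finite, so the family $\{W_{o}:o\in O\}$ of the finite nonempty sets $W_{o}$ of linear orderings of the orbits is a well-orderable family of finite sets; $\mathsf{AC^{WO}_{fin}}$ yields a simultaneous choice of well-orderings $<_{o}$ of the orbits, and combining these lexicographically with a fixed well-ordering of $O$ well-orders $P$, so that $(\mathsf{CAC_{1}^{\aleph_{0}}})'$ gives that $P$ is countable and $\mathsf{CAC_{1}^{\aleph_{0}}}$ holds in $\mathcal{N}$. Finally, $\mathsf{CAC^{\aleph_{0}}}$ implies $\mathsf{AC_{\aleph_{0}}^{\aleph_{0}}}$ by Proposition~3.4, so if $\mathsf{AC_{\aleph_{0}}^{\aleph_{0}}}$ fails in $\mathcal{N}$ then $\mathsf{CAC^{\aleph_{0}}}$ fails in $\mathcal{N}$ as well.

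The step I expect to be the main obstacle is pinning down the two structural facts above: using the finite order of the elements of $\mathcal{G}$ to force every orbit to be an antichain, and verifying well-orderability of the orbit space for an \emph{arbitrary} normal filter $\mathcal{F}$ (so that Brunner's Lemma~2.7 is not directly applicable), while also making sure the transfinite recursion in (1) is genuinely carried out inside $\mathcal{N}$. Once these are secured, the three conclusions follow by routine assembly from $\mathsf{WUT}$, $\mathsf{AC^{WO}_{fin}}$, and the quoted facts on $(\mathsf{CAC^{\aleph_{0}}})'$, $(\mathsf{CAC_{1}^{\aleph_{0}}})'$ and Proposition~3.4.
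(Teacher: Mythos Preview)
Your proposal is correct and follows essentially the same route as the paper: the paper likewise takes $H=sym_{\mathcal{G}}((P,\leq))$, invokes (via \cite[Theorem 3]{Tac2022}) exactly the two structural facts you prove directly --- that each $H$-orbit is an antichain and that the orbit space is well-orderable in $\mathcal{N}$ --- and then deduces (2) from $\mathsf{WUT}$ plus Proposition~3.3(4)/Fact~3.2, and (3) from $\mathsf{AC^{WO}_{fin}}$ (to well-order $P$ via its finite orbits) together with Proposition~3.4. For (1) the paper's primary argument is to cite that $\mathsf{CWF}$ holds in such models and that $\mathsf{CWF}\Leftrightarrow\mathsf{A}$, but it explicitly notes the alternative of building a maximal antichain orbit-by-orbit as in \cite[Theorem 9.2(2)]{Jec1973}; your transfinite recursion $B_{\xi}=\{p\in o_{\xi}: p\perp\bigcup_{\zeta<\xi}B_{\zeta}\}$ is precisely that alternative made explicit, so no genuine divergence.
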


\begin{proof}
Let $(P,\leq)$ be a poset in $\mathcal{N}$. Then the subgroup $H = sym_{\mathcal{G}}((P, \leq))$
is an element of $\mathcal{F}$. 
Following the proof of \cite[Theorem 3]{Tac2022}, $Orb_{H}(p)= \{\phi(p) : \phi \in H\}$ is an antichain in $P$ for each $p\in P$ and $\mathcal{O}=\{Orb_{H}(p): p\in P\}$ is a well-ordered partition of $P$.

(1). In $\mathcal{N}$, $\mathsf{CS}$ and $\mathsf{CWF}$ hold following the methods of \cite[Theorem 3.26]{HST2016} and \cite[proof of Theorem 10 (ii)]{Tac2018}. In \cite{HST}, it has been established that $\mathsf{CWF}$ is equivalent to $\mathsf{A}$ in
$\mathsf{ZFA}$. Thus $\mathsf{A}$ holds in $\mathcal{N}$.

We can observe a different argument to show that $\mathsf{A}$ holds in $\mathcal{N}$ following the proof of \cite[Theorem 9.2(2)]{Jec1973} and the fact that $Orb_{H}(p)$ is an antichain in $P$ for each $p\in P$. 

(2). We show $\mathsf{CAC^{\aleph_{0}}}$ holds in $\mathcal{N}$. Let $(P,\leq)$ be a poset in $\mathcal{N}$ such that all chains in $P$ are finite and all antichains in $P$ are countable (and hence well-orderable). Now, $P$ can be written as a well-orderable disjoint union of antichains. Thus, $P$ is well-orderable in $\mathcal{N}$ since $\mathsf{WUT}$ holds in $\mathcal{N}$. So, we are done by Proposition 3.3(4). Similarly, $\mathsf{CAC_{1}^{\aleph_{0}}}$ holds in $\mathcal{N}$ by Fact 3.2.

(3). Follows from Proposition 3.4 and the arguments of (2).
\end{proof}

\begin{corr}
{\em $\mathsf{CS} + \mathsf{A}+ \mathsf{DF=F} + \mathsf{AC^{WO}_{fin}} + \mathsf{CAC_{1}^{\aleph_{0}}}+\mathsf{DT}$ does not imply $\mathsf{MC_{\aleph_{0}}^{\aleph_{0}}}$ in $\mathsf{ZFA}$. Consequently, $\mathsf{CAC_{1}^{\aleph_{0}}}$ does not imply $\mathsf{CAC^{\aleph_{0}}}$ in $\mathsf{ZFA}$.}
\end{corr}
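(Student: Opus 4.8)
The plan is to derive both assertions from Proposition 3.5 applied to a single, carefully chosen permutation model. So I would first fix a set $A$ of atoms, a group $\mathcal{G}$ of permutations of $A$ of the form allowed in Proposition 3.5 (every $\eta\in\mathcal{G}$ moves only finitely many atoms, or $\eta^{n}=1_{A}$ for all $\eta\in\mathcal{G}$ for some fixed $n\geq 2$), and a normal filter $\mathcal{F}$, so that the associated permutation model $\mathcal{N}$ satisfies $\mathsf{DF=F}$, $\mathsf{AC^{WO}_{fin}}$ and $\mathsf{DT}$, while $\mathsf{AC_{\aleph_{0}}^{\aleph_{0}}}$ fails --- the last being witnessed by an explicit denumerable disjoint family $\{A_{j}:j\in\omega\}$ of denumerable sets of atoms with no choice function in $\mathcal{N}$. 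Isolating such a model and verifying these facts by the customary support analysis is where essentially all the work lies; in particular one must arrange that $\mathsf{AC_{\aleph_{0}}^{\aleph_{0}}}$ fails while Dedekind-finite still means finite, keeping $\mathcal{G}$ within the restricted class that Proposition 3.5 requires, and must check that $\mathsf{DT}$ and $\mathsf{AC^{WO}_{fin}}$ persist.

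Granting such an $\mathcal{N}$, the rest is bookkeeping. First I would record two $\mathsf{ZF}$-implications. (i) $\mathsf{DF=F}$ implies $\mathsf{AC_{fin}^{\aleph_{0}}}$: for a disjoint denumerable family $\{C_{k}\}_{k\in\omega}$ of non-empty finite sets, $\bigcup_{k}C_{k}$ is infinite hence Dedekind-infinite, so fix an injection $h\colon\omega\to\bigcup_{k}C_{k}$; since the $C_{k}$ are finite, the range of $h$ meets infinitely many of them, giving a partial choice function; applying the same reasoning to the family $\{C_{0}\times\cdots\times C_{k}\}_{k\in\omega}$ and reading the result off coordinatewise upgrades this to a choice function on all of $\{C_{k}\}_{k\in\omega}$. (ii) $\mathsf{AC_{fin}^{\aleph_{0}}}+\mathsf{MC_{\aleph_{0}}^{\aleph_{0}}}$ implies $\mathsf{AC_{\aleph_{0}}^{\aleph_{0}}}$: a multiple choice function on a denumerable family of denumerable sets yields a denumerable family of non-empty finite sets, to which $\mathsf{AC_{fin}^{\aleph_{0}}}$ applies. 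Combining (i) and (ii), $\mathsf{DF=F}$ and $\mathsf{MC_{\aleph_{0}}^{\aleph_{0}}}$ together imply $\mathsf{AC_{\aleph_{0}}^{\aleph_{0}}}$ in $\mathsf{ZF}$; hence, since $\mathsf{DF=F}$ holds and $\mathsf{AC_{\aleph_{0}}^{\aleph_{0}}}$ fails in $\mathcal{N}$, $\mathsf{MC_{\aleph_{0}}^{\aleph_{0}}}$ fails in $\mathcal{N}$.

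Next I would invoke Proposition 3.5: by part (1) and its proof, $\mathsf{CS}$, $\mathsf{CWF}$ and hence $\mathsf{A}$ hold in $\mathcal{N}$; by part (3), since $\mathsf{AC^{WO}_{fin}}$ holds and $\mathsf{AC_{\aleph_{0}}^{\aleph_{0}}}$ fails in $\mathcal{N}$, $\mathsf{CAC_{1}^{\aleph_{0}}}$ holds and $\mathsf{CAC^{\aleph_{0}}}$ fails in $\mathcal{N}$. Thus $\mathcal{N}$ is a model of $\mathsf{ZFA}$ in which $\mathsf{CS}$, $\mathsf{A}$, $\mathsf{DF=F}$, $\mathsf{AC^{WO}_{fin}}$, $\mathsf{CAC_{1}^{\aleph_{0}}}$ and $\mathsf{DT}$ all hold but $\mathsf{MC_{\aleph_{0}}^{\aleph_{0}}}$ does not, which is the first assertion. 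For the second: in $\mathcal{N}$ we have $\mathsf{CAC_{1}^{\aleph_{0}}}$ and the failure of $\mathsf{CAC^{\aleph_{0}}}$ (which also follows directly, since $\mathsf{CAC^{\aleph_{0}}}$ implies $\mathsf{AC_{\aleph_{0}}^{\aleph_{0}}}$ by Proposition 3.4, or since any choice function is a multiple choice function while $\mathsf{MC_{\aleph_{0}}^{\aleph_{0}}}$ fails); so $\mathsf{CAC_{1}^{\aleph_{0}}}$ does not imply $\mathsf{CAC^{\aleph_{0}}}$ in $\mathsf{ZFA}$.

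The main obstacle is exactly the construction and verification of the model $\mathcal{N}$: making $\mathsf{DF=F}$ coexist with the failure of $\mathsf{AC_{\aleph_{0}}^{\aleph_{0}}}$ inside a permutation model whose symmetry group has the restricted shape needed for Proposition 3.5 (and checking $\mathsf{DT}$ and $\mathsf{AC^{WO}_{fin}}$ there) is the delicate point; once that model is in hand, the two $\mathsf{ZF}$-implications and the appeals to Propositions 3.4 and 3.5 are routine.
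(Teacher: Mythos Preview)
Your outline is exactly the paper's approach: pick a permutation model whose group consists of finitely supported permutations so that Proposition~3.5 applies, arrange that $\mathsf{DF=F}$, $\mathsf{AC^{WO}_{fin}}$, $\mathsf{DT}$ hold while $\mathsf{AC_{\aleph_{0}}^{\aleph_{0}}}$ fails, and then read off $\mathsf{CS}$, $\mathsf{A}$, $\mathsf{CAC_{1}^{\aleph_{0}}}$, $\neg\mathsf{CAC^{\aleph_{0}}}$ from Proposition~3.5 and Proposition~3.4. The one place where the paper does more than your proposal is the step you flag as the obstacle: rather than constructing a model from scratch, the paper simply invokes the model from \cite[proof of Theorem~5(4)]{Tac2022}, where $A=\bigcup_{n\in\omega}B_{n}$ with each $B_{n}$ denumerable, $\mathcal{G}$ is the weak direct product of the groups of finitely supported even permutations of the $B_{n}$, and supports are finite unions of the $B_{n}$. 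Tachtsis already verified $\mathsf{DF=F}$, $\mathsf{AC^{WO}_{fin}}$, $\mathsf{CAC_{1}^{\aleph_{0}}}$, and $\neg\mathsf{MC_{\aleph_{0}}^{\aleph_{0}}}$ there, and $\mathsf{DT}$ follows by the argument of \cite[Theorem~3.4]{Tac2019c}; so your ``delicate point'' is resolved by citation rather than new work. Your derivation of $\neg\mathsf{MC_{\aleph_{0}}^{\aleph_{0}}}$ from $\mathsf{DF=F}+\neg\mathsf{AC_{\aleph_{0}}^{\aleph_{0}}}$ is correct but unnecessary here, since in this particular model $\neg\mathsf{MC_{\aleph_{0}}^{\aleph_{0}}}$ is what is established directly (with $\neg\mathsf{AC_{\aleph_{0}}^{\aleph_{0}}}$ as the consequence).
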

\begin{proof}
Consider the permutation model (say $\mathcal{M}$) from \cite[proof of Theorem 5(4)]{Tac2022}. In  order  to  describe $\mathcal{M}$, we
start with a model $M$ of $\mathsf{ZFA + AC}$ with a countably infinite set $A$ of atoms, which is written as a disjoint union
$\bigcup\{B_{n} : n \in \omega\}$, where $\vert B_{n}\vert = \aleph_{0}$ for all $n \in \omega$.
For each $n \in \omega$, let $\mathcal{G}_{n}$ be the group of all even permutations of $B_{n}$ which move only finitely
many elements of $B_{n}$. Let $\mathcal{G}$ be the weak direct product of the $\mathcal{G}_{n}$’s for $n \in \omega$.
Consequently, {\em every permutation of $A$ in $\mathcal{G}$ moves only finitely many atoms}.
Let $\mathcal{I}$ be the normal ideal of subsets of $A$ generated by all finite unions of $B_{n}$. Let $\mathcal{F}$ be the normal filter on $\mathcal{G}$ generated by $\{$fix$_{\mathcal{G}}(E)$, $E \in \mathcal{I}\}$ and $\mathcal{M}$ be the permutation model determined by $M$, $\mathcal{G}$, and $\mathcal{F}$. In $\mathcal{M}$, $\mathsf{AC^{WO}_{fin}}$, $\mathsf{DF=F}$, and $\mathsf{CAC_{1}^{\aleph_{0}}}$ hold whereas $\mathsf{MC_{\aleph_{0}}^{\aleph_{0}}}$ fails, and thus $\mathsf{AC_{\aleph_{0}}^{\aleph_{0}}}$ fails (cf. \cite[proof of Theorem 5(4)]{Tac2022}). Since $\mathsf{AC^{WO}_{fin}}$ holds in $\mathcal{N}$, and $\{a \in A : g(a)\not = a\}$ is finite for any $g\in \mathcal{G}$, $\mathsf{DT}$ holds in $\mathcal{N}$ following the arguments of \cite[Theorem 3.4]{Tac2019c}  where Tachtsis proved that $\mathsf{DT}$ holds in L\'{e}vy’s permutation
model (labeled as Model $\mathcal{N}_{6}$ in \cite{HR1998}).
The rest follows from Proposition 3.5 and the fact that if $g\in \mathcal{G}$, then $\{a \in A: g(a)\not = a\}$ is finite.
\end{proof}

\begin{prop}{($\mathsf{ZF}+\mathsf{WOAM}$)}
{\em $\mathsf{RT}$ holds for any locally countable connected graph $H=(V_{H},E_{H})$.}
\end{prop}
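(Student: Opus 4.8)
The idea is to show, under $\mathsf{WOAM}$, that every locally countable connected graph in fact has a \emph{countable} vertex set; $\mathsf{RT}$ for $V_H$ then follows immediately from Fact 3.1(1), since a countable set is well-orderable and $\mathsf{RT}$ holds for every infinite well-orderable set. The only extra ingredient beyond the hypothesis is that $\mathsf{WOAM}$ implies $\mathsf{CUT}$ (Fact 3.1(2)), so we are really working in $\mathsf{ZF}+\mathsf{CUT}$.

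First I would dispose of the trivial cases: if $V_H$ is empty or finite there is nothing to prove, so fix a vertex $v_0\in V_H$ and assume $V_H$ is infinite. Writing $N_H(v)$ for the set of neighbours of $v$ — countable by hypothesis — I would define by recursion on $\omega$ (which needs no choice) the balls $B_0=\{v_0\}$ and $B_{n+1}=B_n\cup\bigcup_{v\in B_n}N_H(v)$. Then, by induction on $n$, each $B_n$ is countable: the base case is clear, and for the inductive step a witnessing injection $B_n\hookrightarrow\omega$ exhibits $\{N_H(v):v\in B_n\}$ as a countable family of countable sets, so $\bigcup_{v\in B_n}N_H(v)$ is countable by $\mathsf{CUT}$, whence $B_{n+1}$ is countable as a union of two countable sets. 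Since $H$ is connected, every vertex lies at finite distance from $v_0$, so $V_H=\bigcup_{n\in\omega}B_n$; this is again a countable family of countable sets, hence $V_H$ is countable by $\mathsf{CUT}$, and the proposition follows via Fact 3.1(1).

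I do not anticipate a real obstacle, as the argument is short; the one point that needs care is that the two applications of $\mathsf{CUT}$ are genuinely legitimate in $\mathsf{ZF}$, i.e.\ that the families $\{N_H(v):v\in B_n\}$ and $\{B_n:n\in\omega\}$ are countable \emph{as sets} — which they are, being images of the countable sets $B_n$, resp.\ $\omega$, under definable maps — rather than merely pointwise countable with a concealed choice of enumerations. As a by-product this shows that the second alternative of $\mathsf{WOAM}$, namely that $V_H$ has an amorphous subset, simply cannot occur for such a graph (a countable set has no amorphous subset), so no separate treatment of that branch is needed.
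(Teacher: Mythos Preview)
Your proof is correct, and it takes a genuinely different route from the paper's.

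The paper argues by the $\mathsf{WOAM}$ dichotomy directly: if $V_H$ is well-orderable one is done, and otherwise one picks an amorphous subset $V_G\subseteq V_H$, partitions it by distance from a fixed vertex, and derives a contradiction by exploiting that $\mathcal{P}(V_G)$ is Dedekind-finite (so only finitely many distance classes are non-empty) together with the $\mathsf{ZF}$-fact that a \emph{finite} union of countable sets is countable. In particular the paper never invokes $\mathsf{CUT}$ inside this argument; the amorphousness is used to force the relevant union to be finite.

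You instead factor the hypothesis through Fact~3.1(2), work in $\mathsf{ZF}+\mathsf{CUT}$, and prove outright that $V_H$ is countable via the ball filtration $B_0\subseteq B_1\subseteq\cdots$. This is shorter and yields a strictly stronger statement: under the weaker hypothesis $\mathsf{CUT}$ (rather than full $\mathsf{WOAM}$), every locally countable connected graph is already countable, so the amorphous branch of $\mathsf{WOAM}$ is vacuous here --- exactly as you note at the end. The price is two genuine appeals to $\mathsf{CUT}$ (once in the inductive step, once for $\bigcup_n B_n$), and your care in checking that the families involved are countable \emph{as sets} is precisely what is needed to make those appeals legitimate in $\mathsf{ZF}$. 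The paper's approach, by contrast, showcases how amorphousness alone collapses the distance decomposition without needing countable unions of countable sets.
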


\begin{proof}
If $V_{H}$ is well-orderable, the conclusion follows from Fact 3.1(1). Otherwise, by $\mathsf{WOAM}$, there exists an amorphous subset $V_{G}\subseteq V_{H}$. Fix some $r \in V_{G}$. Let $V_{0}=\{r\}$. For each $n \in \omega\backslash \{0\}$, define $V_{n} = \{v \in V_{G} : d_{G}(r, v) = n\}$ where ``$d_{G}(r, v) = n$" means there are $n$ edges in the shortest path joining $r$ and $v$. By connectedness of $G$, $V_{G} = \bigcup_{n\in \omega}V_{n}$. Since $V_{G}$ is amorphous, there is at most one $t\in\omega\backslash \{0\}$, such that $V_{t}$ is infinite. 
As $V_{G}$ is amorphous, the power set $\mathcal{P}(V_{G})$ of $V_{G}$ is Dedekind-finite, and thus, for
some $n_{0} \in \omega \backslash \{0\}$, $V_{n} = \emptyset$ for all $n \geq n_{0}$. As $V_{G}$ is amorphous (and thus also infinite) and
$V_{G} =\bigcup_{n\in \omega}V_{n}$ is a disjoint union, there exists exactly one $t < n_{0}$ such that $V_{t}$ is infinite. Then $V_{t}$ is countably infinite since $\vert V_{t-1}\vert<\omega$, $G$ is locally countable, and the union of a finite family of countable sets is countable in $\mathsf{ZF}$. As $V_{t}$ is a countably infinite subset of the amorphous set $V_{G}$, which is impossible (since $V_{G}$ is Dedekind-finite, being amorphous), we arrive at a contradiction.
\end{proof}
\section{Erd\H{o}s--Dushnik--Miller theorem and its variants}

\begin{thm}$(\mathsf{ZF}$)
{\em The following hold:

\begin{enumerate}
   \item $\mathsf{DC_{\aleph_{1}}}$ implies  $\mathsf{EDM}$. In particular,  $\mathsf{W_{\aleph_{1}}}$ {\em +} $``\aleph_{1}$ is regular" implies $\mathsf{EDM}$.
   
   \item If $X\in \{\mathsf{CAC_{1}^{\aleph_{0}}}, \mathsf{CAC^{\aleph_{0}}},\mathsf{PAC^{\aleph_{1}}_{fin}},\mathsf{AC^{\aleph_{0}}_{\aleph_{0}}}, \mathsf{CAC}, \mathsf{AC_{fin}^{\aleph_{0}}}\}$ then $\mathsf{EDM}$ implies $X$. So, $\mathsf{DC}$ does not imply  $\mathsf{EDM}$.
   
   \item $\mathsf{EDM}$ implies $\mathsf{RT}$. 
    
   \item $\mathsf{DF=F}$ does not imply $\mathsf{CAC^{\aleph_{0}}}$. Consequently, $\mathsf{RT}$ does not imply $\mathsf{CAC^{\aleph_{0}}}$ and $\mathsf{EDM}$.
   
\end{enumerate}
}
\end{thm}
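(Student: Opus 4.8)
I would establish the four parts independently, drawing on Facts 3.1--3.2 and Propositions 3.3--3.4. For part (1), the idea is that $\mathsf{DC_{\aleph_{1}}}$ reduces $\mathsf{EDM}$ to the well-orderable case $\mathsf{EDM'}$. Given $G=(V_{G},E_{G})$ with $V_{G}$ uncountable and a colouring $f:[V_{G}]^{2}\to\{0,1\}$, apply $\mathsf{DC_{\aleph_{1}}}$ to $S=V_{G}$ with the relation $s\mathrel{R}y\iff y\notin\mathrm{ran}(s)$: any sequence $s$ of length $\beta<\aleph_{1}$ has countable range (a surjective image of a countable ordinal), so $\mathrm{ran}(s)\neq V_{G}$ and such a $y$ exists. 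The resulting function $\aleph_{1}\to V_{G}$ is injective, so its range $W$ is a well-orderable subset of $V_{G}$ of cardinality $\aleph_{1}$. Since $\mathsf{DC_{\aleph_{1}}}$ implies $\mathsf{DC}$, hence that $\aleph_{1}$ is regular, Proposition 3.3(1) gives $\mathsf{EDM'}$; applying it to $W$ with the colouring $f\restriction[W]^{2}$ yields either an uncountable $f$-monochromatic set of colour $0$ inside $W$ (of cardinality $\aleph_{1}$, hence uncountable in $V_{G}$) or a countably infinite $f$-monochromatic set of colour $1$ --- exactly the conclusion of $\mathsf{EDM}$ for $G$. For the ``in particular'' clause, under $\mathsf{W_{\aleph_{1}}}$ an uncountable $V_{G}$ either has cardinality $\aleph_{1}$ or contains a well-orderable subset of cardinality $\aleph_{1}$, and then ``$\aleph_{1}$ is regular'' plus Proposition 3.3(1) finishes as before.

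For part (2), assume $\mathsf{EDM}$ and let $(P,\leq)$ be a poset all of whose chains are finite and all of whose antichains are countable. Colour $\{x,y\}\in[P]^{2}$ by $0$ when $x,y$ are incomparable and by $1$ when they are comparable; a monochromatic set of colour $0$ is then an antichain and one of colour $1$ is a chain, so $\mathsf{EDM}$ would force either an uncountable antichain or a countably infinite chain, both impossible --- hence $P$ is countable and $\mathsf{CAC^{\aleph_{0}}}$ holds. Swapping the two colours gives $\mathsf{EDM}\Rightarrow\mathsf{CAC_{1}^{\aleph_{0}}}$. The remaining five implications are then purely formal: $\mathsf{CAC^{\aleph_{0}}}\Rightarrow\mathsf{AC_{\aleph_{0}}^{\aleph_{0}}}$ by Proposition 3.4; $\mathsf{CAC_{1}^{\aleph_{0}}}\Rightarrow\mathsf{PAC_{fin}^{\aleph_{1}}}$ by Fact 3.1(5); and $\mathsf{CAC_{1}^{\aleph_{0}}}\Rightarrow\mathsf{CAC}\Rightarrow\mathsf{AC_{fin}^{\aleph_{0}}}$ by Fact 3.1(3). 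Finally, since $\mathsf{DC}$ does not imply $\mathsf{CAC_{1}^{\aleph_{0}}}$ (Fact 3.1(5)) whereas $\mathsf{EDM}\Rightarrow\mathsf{CAC_{1}^{\aleph_{0}}}$, $\mathsf{DC}$ does not imply $\mathsf{EDM}$.

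For part (3), let $A$ be infinite and $[A]^{2}=X\cup Y$. If $A$ is well-orderable, $\mathsf{RT}$ for $A$ is Fact 3.1(1). Otherwise $A$ is not countable (a countable set is well-orderable), hence uncountable, so $\mathsf{EDM}$ applies to $A$ with the colouring $c(e)=0\iff e\in X$: it returns either an uncountable $c$-monochromatic $U\subseteq A$ of colour $0$ --- which, being uncountable, is infinite and satisfies $[U]^{2}\subseteq X$ --- or a countably infinite $c$-monochromatic $W\subseteq A$ of colour $1$ with $[W]^{2}\subseteq Y$; either is the infinite homogeneous set required by $\mathsf{RT}$.

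For part (4), by Proposition 3.4 it suffices to exhibit a $\mathsf{ZF}$-model in which $\mathsf{DF=F}$ holds but $\mathsf{AC_{\aleph_{0}}^{\aleph_{0}}}$ fails: in such a model $\mathsf{CAC^{\aleph_{0}}}$ fails, $\mathsf{RT}$ holds because $\mathsf{DF=F}\Rightarrow\mathsf{RT}$ (Fact 3.1(1)), and $\mathsf{EDM}$ fails by part (2), which gives all three non-implications at once. This last point is where I expect the real work to lie: a permutation model cannot be used, since in any Fraenkel--Mostowski model the set of atoms is an infinite Dedekind-finite set and so $\mathsf{DF=F}$ fails there, so one must invoke (or construct) a genuine $\mathsf{ZF}$ symmetric-forcing model of $\mathsf{DF=F}+\neg\mathsf{AC_{\aleph_{0}}^{\aleph_{0}}}$; once such a model is cited, parts (1)--(3) are routine reductions given Propositions 3.3--3.4 and Facts 3.1--3.2.
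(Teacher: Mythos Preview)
Parts (1)--(3) are correct and essentially match the paper's arguments; the paper phrases (1) and (3) via the contrapositive form of $\mathsf{EDM}$ (assuming all colour-$0$ sets are countable and all colour-$1$ sets are finite, then showing $V_G$ is countable), but this is immaterial.

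For part (4) your overall strategy is right --- exhibit a $\mathsf{ZF}$ model of $\mathsf{DF=F}\land\neg\mathsf{AC_{\aleph_0}^{\aleph_0}}$ --- but your reason for ruling out permutation models is simply false, and it leads you to misjudge where the work lies. It is \emph{not} true that the set of atoms is Dedekind-finite in every Fraenkel--Mostowski model: that holds only when the normal ideal consists of finite sets. The paper uses precisely a permutation model of the kind you exclude, namely $\mathcal{N}_{41}$: the atoms form a countable disjoint union $A=\bigcup_{n\in\omega}B_n$ with each $(B_n,\leq_n)\cong(\mathbb{Q},\leq)$, the group consists of order-automorphisms of each $B_n$, and the ideal is generated by \emph{finite unions of the $B_n$'s}. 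Since each $B_n$ is itself a support, each $B_n$ is well-orderable and countably infinite in $\mathcal{N}_{41}$, so $A$ is Dedekind-infinite there; in fact $\mathsf{DF=F}$ holds in $\mathcal{N}_{41}$ while $\mathsf{AC_{\aleph_0}^{\aleph_0}}$ fails. The passage to $\mathsf{ZF}$ is then not by a direct symmetric-forcing construction but by Pincus' transfer theorem (Theorem 2.6 in the paper): $\mathsf{DF=F}$ is injectively boundable and $\neg\mathsf{AC_{\aleph_0}^{\aleph_0}}$ is boundable, so their conjunction transfers from $\mathsf{ZFA}$ to $\mathsf{ZF}$. Thus the ``real work'' you anticipate is replaced by citing $\mathcal{N}_{41}$ and invoking Pincus.
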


\begin{proof}
(1). Following Proposition 3.3 and the arguments of \cite[Theorem 9(1,2)]{Tac2022}, we can see that $\mathsf{DC_{\aleph_{1}}}$ implies  $\mathsf{EDM}$ in $\mathsf{ZF}$.
In particular, let $G=(V_{G}, E_{G})$ be a graph and $f:[V_{G}]^{2}\rightarrow \{0,1\}$ be a coloring such that all sets monochromatic in color $0$ are countable and all sets monochromatic in color $1$ are finite. By $\mathsf{W_{\aleph_{1}}}$, $\aleph_{1} \leq \vert V_{G}\vert$ or $\vert V_{G}\vert\leq \aleph_{1}$. For the second case, $V_{G}$ is well-orderable, and we are done by Proposition 3.3 since $\mathsf{DC_{\aleph_{1}}}$ implies $\mathsf{W_{\aleph_{1}}}$ + ``$\aleph_{1}$ is regular". Otherwise, $V_{G}$ has a subset $H$ with cardinality $\aleph_{1}$. Since $H$ is well-orderable, it is countable by the arguments of Proposition 3.3; a contradiction. 

(2). We prove $\mathsf{EDM}$ implies $\mathsf{CAC^{\aleph_{0}}}$. Let $(P,\leq)$ be a poset satisfying the hypotheses of $\mathsf{CAC^{\aleph_{0}}}$. Assume that $P$ is uncountable. Let $G=(V_{G},E_{G})$ be a complete graph such that $V_{G}=P$ and $f:[V_{G}]^{2}\rightarrow \{0,1\}$ be a coloring such that
$f\{x,y\}=1$ if $x\leq y$ or $y\leq x$, and $f\{x,y\}=0$ otherwise. 
By $\mathsf{EDM}$, either there is an uncountable set monochromatic in color $0$ (which is an antichain in $(P,\leq)$) or there is a countably infinite set monochromatic in color $1$ (which is a chain in $(P,\leq)$), a contradiction. Similarly, we can prove $\mathsf{EDM}$ implies $\mathsf{CAC_{1}^{\aleph_{0}}}$. The rest follows from Proposition 3.4 and Fact 3.1.

(3). Let $A$ be an infinite set such that $\mathsf{RT}$ fails for $A$. 
Let $\{X,Y\}$ be a partition of $[A]^{2}$ such that there are no infinite subsets $B$ of $A$ for which 
either $[B]^2 \subseteq X$ or $[B]^2 \subseteq Y$.
Let $G=(V_{G},E_{G})$ be a complete graph such that $V_{G}=A$ and $f:[V_{G}]^{2}\rightarrow \{0,1\}$ be a coloring such that
$f\{x,y\}=1$ if $\{x, y\}\in X$ and $f\{x,y\}=0$ if $\{x, y\}\in Y$.
By assumption, all sets monochromatic in color $i$ are finite for each $i\in\{0,1\}$. By $\mathsf{EDM}$, $\vert V_{G}\vert \leq \aleph_{0}$ (since we are using the first definition of uncountable sets), and thus $V_{G}=A$ is well-orderable. The contradiction follows from the fact that $\mathsf{RT}$ holds for $A$ in $\mathsf{ZF}$ (cf. Fact 3.1). 

(4). Consider the model $\mathcal{N}_{41}$ from \cite{HR1998}. 
We start with a model $M$ of $\mathsf{ZFA + AC}$ where $A =\bigcup \{B_{n}:n \in \omega\}$ is a disjoint union, where each $B_{n}$ is countably infinite  and for each $n \in\omega$, 
$(B_{n}, \leq_{n}) \cong (\mathbb{Q}, \leq)$ (i.e., ordered like the rationals by $\leq_{n}$). Let $\mathcal{G}$ be the group of all permutations on $A$ such that for all $n \in \omega$, and all $\phi\in \mathcal{G}$, $\phi$ is an order automorphism of $(B_{n}, \leq_{n})$. Let $\mathcal{I}$ be the normal ideal of subsets of $A$ which is generated by finite unions of $B_{n}$’s and $\mathcal{F}$ be the normal filter on $\mathcal{G}$ generated by  $\{$fix$_{\mathcal{G}}(E)$, $E \in \mathcal{I}\}$. Let $\mathcal{N}_{41}$ be the Fraenkel–Mostowski model determined by $M$, $\mathcal{G}$, and $\mathcal{F}$.

In $\mathcal{N}_{41}$, $\mathsf{DF=F}$ holds and $\mathsf{AC^{\aleph_{0}}_{\aleph_{0}}}$ fails (cf. \cite[Theorem 4]{Tac2019b}, \cite[Note 112]{HR1998}). Pincus \cite{Pin1972} showed that $\mathsf{DF = F}$ is  equivalent to 
\begin{center}
$\forall x(\vert x\vert_{-} \leq \omega\rightarrow$ $(\mathsf{Df}(x)\rightarrow\psi(x)$)), 
\end{center}
where $\mathsf{Df}(x)\leftrightarrow \neg(\exists y)(y \subseteq x \land \vert y\vert = \omega)$ and $\psi(x)=$``$x$ is finite" (we note that $\psi(x)$ is a boundable formula). Thus, $\mathsf{DF = F}$ is injectively boundable.
Furthermore, $\neg \mathsf{AC^{\aleph_{0}}_{\aleph_{0}}}$ is  boundable, and hence injectively boundable.
Since $\phi$ = ``$\mathsf{DF=F}$ $\land$ $\neg\mathsf{AC^{\aleph_{0}}_{\aleph_{0}}}"$ is a conjunction of injectively boundable statements, which has a $\mathsf{ZFA}$ model, it follows
from Theorem 2.6 that $\phi$ has a $\mathsf{ZF}$ model.
By Proposition 3.4, we can see that $\mathsf{DF=F}$ (and thus $\mathsf{RT}$) does not imply $\mathsf{CAC^{\aleph_{0}}}$ in $\mathsf{ZF}$. 
\end{proof}

\begin{thm}
{\em The following hold:
\begin{enumerate}
    \item $\mathsf{WOAM + RT}$ implies $\mathsf{EDM}$ and $\mathsf{WOAM + CAC}$ implies $\mathsf{CAC^{\aleph_{0}}}$ in $\mathsf{ZF}$.  In particular, $\mathsf{EDM}$ does not imply ``There are no amorphous sets" in $\mathsf{ZFA}$.
    \item Let $A$ be a set of atoms, $\mathcal{G}$ be any group of permutations of $A$, and the filter $\mathcal{F}$ of subgroups of $\mathcal{G}$ is generated by $\{${\em fix}$_{\mathcal{G}}(E):E\in [A]^{<\omega}\}$. Let $\mathcal{N}$ be the permutation model determined by $A$, $\mathcal{G}$, and $\mathcal{F}$. If $\mathcal{N}$ satisfies Mostowski’s intersection lemma where $A$ is Dedekind-finite, and $\mathsf{RT}$ holds in $\mathcal{N}$, then $\mathsf{EDM}$ holds in $\mathcal{N}$.
    
    \item $\mathsf{EDM}$ holds in $\mathcal{N}_{3}$. Consequently, $\mathsf{EDM}$ implies none of $\mathsf{WOAM}$, $\mathsf{CS}$, and $\mathsf{A}$(Antichain Principle) in $\mathsf{ZFA}$.
\end{enumerate}
}
\end{thm}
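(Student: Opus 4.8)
My plan is to deduce $\mathsf{EDM}$ from $\mathsf{RT}$ by first disposing of well-orderable vertex sets using the results of Section~3, and then, in the non-well-orderable case, passing to a Dedekind-finite subset where $\mathsf{RT}$ does all the work; the one elementary observation that glues everything together is that an \emph{infinite} subset of a Dedekind-finite set is uncountable in the sense $\vert\cdot\vert\not\le\aleph_{0}$ (otherwise it would be countably infinite, hence Dedekind-infinite). For the first implication of (1), take a graph $G=(V_{G},E_{G})$ with $V_{G}$ uncountable and a colouring $f\colon[V_{G}]^{2}\to\{0,1\}$, and suppose towards a contradiction that every set monochromatic in colour $0$ is countable and every set monochromatic in colour $1$ is finite. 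Apply $\mathsf{WOAM}$ to $V_{G}$. If $V_{G}$ is well-orderable, then since $\mathsf{WOAM}$ implies $\mathsf{CUT}$ and hence ``$\aleph_{1}$ is regular'' (Fact~3.1(2)), Proposition~3.3(1) yields $\mathsf{EDM'}$, which applied to $G$ produces either an uncountable colour-$0$ clique or a countably infinite colour-$1$ clique, contradicting the assumption. If instead $V_{G}$ has an amorphous subset $S$, apply $\mathsf{RT}$ to $f\restriction[S]^{2}$ to get an infinite monochromatic $T\subseteq S$; as $S$ is amorphous it is Dedekind-finite, so $T$ is infinite and Dedekind-finite, hence uncountable, and $T$ is then either an uncountable colour-$0$ clique or an infinite colour-$1$ clique --- again a contradiction.

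The implication $\mathsf{WOAM}+\mathsf{CAC}\Rightarrow\mathsf{CAC^{\aleph_{0}}}$ runs along the same lines: given a poset $(P,\le)$ all of whose chains are finite and whose antichains are countable, suppose $P$ is uncountable. If $P$ is well-orderable, Proposition~3.3(2) (again using that $\mathsf{WOAM}$ gives ``$\aleph_{1}$ regular'') forces $P$ countable, a contradiction. Otherwise $\mathsf{WOAM}$ gives an amorphous $S\subseteq P$, and $\mathsf{CAC}$ applied to $(S,\le\restriction S)$ produces an infinite chain (impossible, since all chains of $P$ are finite) or an infinite antichain $Q\subseteq S$; but $Q$ is an antichain of $P$, hence countable, while $Q$ is an infinite subset of the Dedekind-finite set $S$, hence uncountable --- a contradiction. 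For the ``in particular'' clause I would exhibit $\mathcal{N}_{1}$: $\mathsf{WOAM}$ holds there, its set of atoms $A$ is amorphous, and $\mathsf{RT}$ holds because any colouring of $[A]^{2}$ with finite support $E$ is constant on $[A\setminus E]^{2}$ (since fix$_{\mathcal{G}}(E)$ is transitive on $[A\setminus E]^{2}$), while by Brunner's Lemma (Lemma~2.8) every set of $\mathcal{N}_{1}$ is well-orderable or contains an infinite subset of $A$, so $\mathsf{RT}$ holds for all infinite sets by Fact~3.1(1). Then the implication just proved gives $\mathsf{EDM}$ in $\mathcal{N}_{1}$, while ``there are no amorphous sets'' fails there.

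For (2) I would argue exactly as above but inside $\mathcal{N}$: if $V_{G}$ is well-orderable, $\mathsf{EDM'}$ holds in every permutation model (Proposition~3.3(4)) and we are done; if not, Brunner's Lemma gives an infinite $S\subseteq A$ together with an injection $g\colon S\to V_{G}$, and applying $\mathsf{RT}$ in $\mathcal{N}$ to the colouring $\{a,b\}\mapsto f\{g(a),g(b)\}$ on $[S]^{2}$ produces an infinite $S'\subseteq S$ with $g[S']$ $f$-monochromatic; since $S'\subseteq A$ and $A$ is Dedekind-finite, $g[S']$ is an infinite, Dedekind-finite, hence uncountable monochromatic set, which contradicts the assumed bounds on monochromatic sets in either colour. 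For (3) I would check that $\mathcal{N}_{3}$ satisfies the hypotheses of (2): it is built with finite supports, $A$ is Dedekind-finite, and it satisfies Mostowski's intersection lemma (so Brunner's Lemma is available); and $\mathsf{RT}$ holds in $\mathcal{N}_{3}$ because a colouring of $[A]^{2}$ with support $E=\{e_{1}<\cdots<e_{k}\}$ is constant on $[I]^{2}$ for the infinite final interval $I=\{a\in A:a>e_{k}\}$ (fix$_{\mathcal{G}}(E)$ acts transitively on $I$), the same argument together with Brunner's Lemma and Fact~3.1(1) covering all infinite sets. Hence $\mathsf{EDM}$ holds in $\mathcal{N}_{3}$ by (2). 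Finally, $A$ is not well-orderable in $\mathcal{N}_{3}$ and $\mathcal{N}_{3}$ has no amorphous sets, so $\mathsf{WOAM}$ fails; and any cofinal subset of the densely ordered poset $(A,\le)$ must, above the maximum of its finite support, contain every atom, so $(A,\le)$ has neither two disjoint cofinal subsets nor a cofinal well-founded subset, killing $\mathsf{CS}$ and $\mathsf{CWF}$ (equivalently $\mathsf{A}$ over $\mathsf{ZFA}$). Thus $\mathsf{EDM}$ implies none of $\mathsf{WOAM}$, $\mathsf{CS}$, $\mathsf{A}$ over $\mathsf{ZFA}$.

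The main obstacle is conceptual rather than computational: one must notice that ``$V_{G}$ uncountable'' splits into the well-orderable case (dispatched by Proposition~3.3) and the case where, via Brunner's Lemma, the graph already contains a copy of an infinite subset of the atoms, and that on such a subset $\mathsf{RT}$ automatically supplies an \emph{uncountable} colour-$0$ clique precisely because it is Dedekind-finite. The only genuinely model-specific labour is verifying $\mathsf{RT}$ --- and, so that Brunner's Lemma applies, Mostowski's intersection lemma --- in $\mathcal{N}_{1}$ and $\mathcal{N}_{3}$, along with the standard failures of $\mathsf{WOAM}$, $\mathsf{CS}$ and $\mathsf{A}$ in $\mathcal{N}_{3}$.
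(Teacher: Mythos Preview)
Your proposal is correct and follows essentially the same route as the paper: split on $\mathsf{WOAM}$ (respectively, Brunner's Lemma) into the well-orderable case handled by Proposition~3.3 and the case of an amorphous/Dedekind-finite piece on which $\mathsf{RT}$ produces an infinite monochromatic set, then reach a contradiction. The only cosmetic difference is that you extract the contradiction as ``infinite Dedekind-finite $\Rightarrow$ uncountable, contradicting the colour bounds'' whereas the paper phrases it as ``the monochromatic set is countably infinite by hypothesis, contradicting Dedekind-finiteness''; and you supply short direct arguments for $\mathsf{RT}$ in $\mathcal{N}_{1}$, $\mathcal{N}_{3}$ and for the failures of $\mathsf{WOAM}$, $\mathsf{CS}$, $\mathsf{A}$ in $\mathcal{N}_{3}$, where the paper simply cites the literature.
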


\begin{proof}
(1). Assume that $\mathsf{WOAM}$ + $\mathsf{RT}$ is true. 
Let $G=(V_{G}, E_{G})$ be a graph and $f:[V_{G}]^{2}\rightarrow \{0,1\}$ be a coloring such that all sets monochromatic in color $0$ are countable, and all sets monochromatic in color $1$ are finite. If $V_{G}$ is well-orderable then we are done by  Proposition 3.3 and the fact that $\mathsf{WOAM}$ implies `$\aleph_{1}$ is regular' in $\mathsf{ZF}$ (cf. Fact 3.1(2)). 
Assume $V_{G}$ is not well-orderable. By $\mathsf{WOAM}$, $V_{G}$ has an amorphous subset, say $A$.  
Define the following partition of $[A]^{2}$:
\begin{center}
    $X=\{\{a,b\}\in [A]^{2}: f\{a,b\}=0\}$, $Y=\{\{a,b\}\in [A]^{2}: f\{a,b\}=1\}$.
\end{center}
Since $(A, E_{G}\restriction A)$ is an infinite graph where all sets monochromatic in color $1$ are finite, there is no infinite subset $B' \subseteq A$ such that $[B']^2 \subseteq Y$. By $\mathsf{RT}$, there is an infinite subset $B \subseteq A$ such that $[B]^2 \subseteq X$. So $(A, E_{G}\restriction A)$ has an infinite set monochromatic in color $0$, say $C$.
By assumption, $C$ is a countably infinite subset of $A$. This contradicts the fact
that $A$ is amorphous.

Similarly, $\mathsf{WOAM + CAC}$ implies $\mathsf{CAC^{\aleph_{0}}}$ in $\mathsf{ZF}$ following Proposition 3.3. The rest follows from the fact that $\mathsf{WOAM + RT}$ + ``There exists an amorphous set” is true in the basic Fraenkel model $\mathcal{N}_{1}$ (cf. \cite{Bla1977, HR1998}).

(2). In $\mathcal{N}$, assume $G=(V_{G}, E_{G})$ and $f:[V_{G}]^{2}\rightarrow \{0,1\}$ as in the proof of (1). If $V_{G}$ is well-orderable then we are done by  Proposition 3.3. Otherwise, by Lemma 2.7, there exists a bijection from an infinite subset $A'$ of $A$ onto some $H\subset V_{G}$ under the given assumptions.
Define the following partition of $[H]^{2}$ as in the proof of (1):
$X=\{\{a,b\}\in [H]^{2}: f\{a,b\}=0\}$, $Y=\{\{a,b\}\in [H]^{2}: f\{a,b\}=1\}$.
By $\mathsf{RT}$ and following the arguments of (1), there is a countably infinite subset $C$ of $H$. Thus $A'$ is Dedekind-infinite in $\mathcal{N}$ since $\vert H\vert=\vert A'\vert$. Consequently, the set $A$ of atoms is Dedekind-infinite in $\mathcal{N}$, which contradicts the fact that $A$ is a Dedekind-finite set in $\mathcal{N}$.

(3). We recall the definition of $\mathcal{N}_{3}$  from \cite{HR1998}. We start with a model $M$ of $\mathsf{ZFA + AC}$ with a countably infinite set $A$ of atoms
using an ordering $\leq$ on $A$ chosen so that $(A, \leq)$ is order-isomorphic to the set $\mathbb{Q}$ of the rational numbers with the usual ordering. Let $\mathcal{G}$ be the group of all
order automorphisms of $(A, \leq)$ and $\mathcal{F}$ be the normal filter on $\mathcal{G}$ generated
by the subgroups $\{\text{fix}_{\mathcal{G}}(E), E \in [A]^{<\omega}\}$. Let $\mathcal{N}_{3}$ be the Fraenkel--Mostowski model
determined by $M$, $\mathcal{G}$, and $\mathcal{F}$. The rest follows from (2), and the following known facts about $\mathcal{N}_{3}$:

\begin{enumerate}[(i)]
    \item $\mathcal{N}_{3}$ satisfies Mostowski’s intersection lemma (cf. \cite{Jec1973}). 
    
    \item $\mathsf{RT}$ is true in $\mathcal{N}_{3}$ (cf. \cite[Theorem 2.4]{Tac2016a}).
    
    \item The set of atoms $A$ is a Dedekind-finite set in $\mathcal{N}_{3}$.
    \item $\mathsf{WOAM}$ fails in $\mathcal{N}_{3}$ (cf. \cite{HR1998}).
    \item $\mathsf{CS}$ and $\mathsf{LW}$ (Every linearly ordered set can be well ordered) fail in $\mathcal{N}_{3}$ \cite[Theorem 7]{Tac2022} and $\mathsf{A}$ implies $\mathsf{LW}$ in $\mathsf{ZFA}$ \cite[Theorem 9.1]{Jec1973}.
\end{enumerate}
\end{proof}

\begin{prop}{($\mathsf{ZF}$)}
{\em The statements $\mathsf{EDM^{n}}$ (``If $G=(V_{G}, E_{G})$ is a graph such that $V_{G}$ is uncountable, then for all coloring $f:[V_{G}]^{2}\rightarrow n$ either there is an uncountable set $X_{1}\subseteq V_{G}$  monochromatic in color $i_{1}$  or there is a countably infinite set $X_{2}\subseteq V_{G}$ monochromatic in color $i_{2}$ for some $i_{1}, i_{2}\in n$ such that $i_{1}\neq i_{2}$")
are equivalent for all integers
$n \geq 2$. Moreover, $\mathsf{EDM^{n}}$ implies $\mathsf{RT}$ for all
$n \in\omega\backslash\{0,1\}$.} 
\end{prop}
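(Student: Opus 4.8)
The plan is to prove the equivalence of the $\mathsf{EDM^{n}}$'s by a downward induction on $n$, showing $\mathsf{EDM^{n+1}} \Rightarrow \mathsf{EDM^{n}}$ and $\mathsf{EDM^{2}} \Rightarrow \mathsf{EDM^{n}}$ for all $n$; since $\mathsf{EDM^{2}}$ is exactly $\mathsf{EDM}$, this gives the full cycle of equivalences. The direction $\mathsf{EDM^{n+1}} \Rightarrow \mathsf{EDM^{n}}$ is immediate: a coloring $f : [V_G]^2 \to n$ is also a coloring into $n+1$ that simply omits the last color, and any monochromatic set produced by $\mathsf{EDM^{n+1}}$ uses colors among the first $n$, hence is a witness for $\mathsf{EDM^{n}}$. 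The substantive direction is $\mathsf{EDM} \Rightarrow \mathsf{EDM^{n}}$, which I would handle by the standard color-merging trick, taking care that no choice is used beyond what $\mathsf{ZF}$ already provides (the vertex set is not assumed well-orderable, but the merging construction is canonical).

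For $\mathsf{EDM} \Rightarrow \mathsf{EDM^{n}}$, let $G = (V_G, E_G)$ with $V_G$ uncountable and let $f : [V_G]^2 \to n$ with $n \geq 2$. Define $g : [V_G]^2 \to \{0,1\}$ by $g\{x,y\} = 0$ if $f\{x,y\} \in \{0, 1, \dots, n-2\}$ and $g\{x,y\} = 1$ if $f\{x,y\} = n-1$; this is a legitimate $\mathsf{ZF}$-definable function with no choice involved. Apply $\mathsf{EDM}$ to $g$. In the case where there is a countably infinite set $Y_2$ monochromatic in $g$-color $1$, then $Y_2$ is monochromatic in $f$-color $n-1$, and since $Y_2$ is countably infinite we are done with $i_1, i_2$ chosen appropriately (e.g.\ $i_2 = n-1$ and $i_1$ any other color, the uncountable/countable dichotomy in $\mathsf{EDM^n}$ only requires one of the two options). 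In the case where there is an uncountable set $W$ monochromatic in $g$-color $0$, then $f$ restricted to $[W]^2$ takes values only in $\{0, 1, \dots, n-2\}$, an $(n-1)$-coloring of the uncountable set $W$; by the inductive hypothesis $\mathsf{EDM^{n-1}}$ applied to the induced graph $(W, E_G \restriction W)$, we obtain either an uncountable $f$-monochromatic subset of $W$ or a countably infinite one in some color, and since $W \subseteq V_G$ this is exactly a witness for $\mathsf{EDM^{n}}$ on $G$. The base case $n = 2$ is trivial, so by induction all $\mathsf{EDM^{n}}$ follow from $\mathsf{EDM} = \mathsf{EDM^2}$; combined with the trivial downward direction, all $\mathsf{EDM^{n}}$ for $n \geq 2$ are equivalent.

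For the second assertion, $\mathsf{EDM^{n}} \Rightarrow \mathsf{RT}$: by the equivalence just established it suffices to recall that $\mathsf{EDM^{2}} = \mathsf{EDM}$ implies $\mathsf{RT}$, which is Theorem 4.1(3) in the excerpt. Alternatively one gives a direct argument mirroring the proof of Theorem 4.1(3): given an infinite $A$ on which $\mathsf{RT}$ fails, witnessed by a partition $[A]^2 = X \cup Y$ with no infinite homogeneous set, take the complete graph on $A$ with an $n$-coloring that uses only colors $0$ and $1$ according to $X$ and $Y$; since $A$ is infinite and has no infinite homogeneous set of either color, $\mathsf{EDM^{n}}$ forces $|A| \leq \aleph_0$ (using the first definition of uncountable), so $A$ is well-orderable, and then $\mathsf{RT}$ holds on $A$ by Fact 3.1(1), a contradiction.

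The only delicate point is bookkeeping the choice content: one must confirm that the merging function $g$, the passage to the induced subgraph, and the application of the inductive hypothesis never require choosing from an uncountable family, and indeed they do not, since $\mathsf{EDM}$ is being invoked as a black box and the definitions of $g$ and of the restriction are uniform. I expect no real obstacle; the proof is essentially the classical Ramsey-style reduction carried out carefully inside $\mathsf{ZF}$.
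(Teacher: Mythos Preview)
Your approach is essentially the same color-merging induction that the paper uses; the paper merges the two top colors and proves $\mathsf{EDM^{n}}\Rightarrow\mathsf{EDM^{n+1}}$, while you merge all but the top color and prove $\mathsf{EDM^{2}}\Rightarrow\mathsf{EDM^{n}}$. These are reorganizations of the same idea, and your treatment of the second assertion ($\mathsf{EDM^n}\Rightarrow\mathsf{RT}$) matches the paper's.

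There is, however, one genuine gap. You assert that ``$\mathsf{EDM^{2}}$ is exactly $\mathsf{EDM}$'' and then apply the asymmetric $\mathsf{EDM}$ to $g$, handling only the two outcomes ``uncountable in $g$-color $0$'' and ``countably infinite in $g$-color $1$''. But as formulated in the proposition, $\mathsf{EDM^{n}}$ is symmetric in the colors: for $n=2$ it only promises an uncountable monochromatic set in \emph{some} color $i_{1}$ or a countably infinite monochromatic set in \emph{some} color $i_{2}\neq i_{1}$. So from $\mathsf{EDM^{2}}$ alone you are not entitled to assume the uncountable witness sits in $g$-color $0$; two further cases must be dispatched. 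Case (a), an uncountable set in $g$-color $1$, is immediate: it is $f$-monochromatic of color $n-1$. Case (b), a countably infinite set $Y$ in $g$-color $0$, is the one that needs work: $Y$ is well-orderable and $f\restriction[Y]^{2}$ takes values in $\{0,\dots,n-2\}$, so one applies $\mathsf{RT}$ (which, as you note in your final paragraph, follows from $\mathsf{EDM^{2}}$) to extract an infinite, hence countably infinite, $f$-monochromatic subset. This is precisely the content of the paper's Case~(ii). Once (a) and (b) are added, your argument is complete and coincides with the paper's.
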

\begin{proof}
Since any $f:[V_{G}]^{2}\rightarrow n$ maps $[V_{G}]^{2}$ to $m$ if $m>n\geq 2$, $\mathsf{EDM^{m}}$ implies $\mathsf{EDM^{n}}$ under these circumstances. 
We prove that $\mathsf{EDM^{n}}$ implies $\mathsf{EDM^{n+1}}$ for $n\geq 2$. The rest follows by mathematical induction.
Let $f:[V_{G}]^{2}\rightarrow n+1$ be a coloring where $V_{G}$ is uncountable. Let $f_{1} : [V_{G}]^2 \rightarrow n$  be given by $f_{1}(A) = min(f(A), n-1)$.
By $\mathsf{EDM^{n}}$, either there is an uncountable set $X_{1}\subseteq V_{G}$ that is $f_{1}$-monochromatic in color $i_{1}$, or there is a countably infinite set $X_{2}\subseteq V_{G}$ that is $f_{1}$-monochromatic in color $i_{2}$ for some $i_{1}, i_{2}\in n$ such that $i_{1}\neq i_{2}$. 
Fix $k\in\{1,2\}$. If $i_{k}\leq n-2$, then $f[X_{k}]^{2}=i_{k}\in n+1$. 

\textbf{Case (i):} Let $i_{1}=n-1$, then $f(A)\in \{n,n-1\}$ for all  $A\in [X_{1}]^2$. Define $f_{2}:[X_{1}]^2\rightarrow 2$ by $f_{2}(A)=n-f(A)$. By $\mathsf{EDM^{2}}$ (which follows from $\mathsf{EDM^{n}}$),  
either there is an uncountable set $Y_{1}\subseteq X_{1}$ that is $f_{2}$-monochromatic in color $j$, or there is a countably infinite set $Y_{2}\subseteq X_{1}$ that is $f_{2}$-monochromatic in color $1-j$ where $j \in \{0, 1\}$. 
Thus either $f[Y_{1}]^{2}=n-j\in n+1$ or $f[Y_{2}]^{2}=n-1+j\in n+1$.

\textbf{Case (ii):} Let $i_{2}=n-1$, then $f(A)\in \{n,n-1\}$ for all  $A\in [X_{2}]^2$. Define the following partition of $[X_{2}]^{2}$:
\begin{center}
    $X=\{\{a,b\}\in [X_{2}]^{2}: f\{a,b\}=n\}$, $Y=\{\{a,b\}\in [X_{2}]^{2}: f\{a,b\}=n-1\}$.
\end{center}
By $\mathsf{RT}$ (which follows from $\mathsf{EDM^{2}}$, see the arguments of Theorem 4.1(3)), there is a countably infinite subset $B \subseteq X_{2}$ such that either $[B]^2 \subseteq X$ or $[B]^2 \subseteq Y$. Thus either $f[B]^{2}=n\in n+1$ or $f[B]^{2}=n-1\in n+1$.

This completes the proof of the first assertion. Following Theorem 4.1(3), $\mathsf{EDM^{2}}$ implies $\mathsf{RT}$. Thus, $\mathsf{EDM^{n}}$ implies $\mathsf{RT}$ for all
$n \in\omega\backslash\{0,1\}$.
\end{proof}

\begin{thm}
{\em Fix any $n\in\omega\backslash\{0,1\}$ and $X\in\{\mathsf{CAC^{\aleph_{0}}},
\mathsf{CAC_{1}^{\aleph_{0}}}, \mathsf{CS}\}$. There is a model $\mathcal{M}$ of $\mathsf{ZFA}$ where $X$ holds but $\mathsf{AC_{n}^{-}}$ and the statement ``there are no amorphous sets" fail.
Moreover, the following hold in $\mathcal{M}$:
\begin{enumerate}
    \item $\neg\mathsf{EDM}$ and $\neg\mathsf{EDM^{k}}$ for each $k\geq 2$.
    \item Antichain Principle $\mathsf{A}$.
    
\end{enumerate}
}
\end{thm}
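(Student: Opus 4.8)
The plan is to take $\mathcal{M}$ to be the permutation model $\mathcal{N}_{HT}^{1}(n)$ of \cite{HT2020}. Recall it is built from a model of $\mathsf{ZFA+AC}$ whose set of atoms is $A=\bigsqcup_{i\in\omega}A_{i}$ with $|A_{i}|=n$ for every $i$, where $\mathcal{G}$ is a group of block-respecting, \emph{finitary} permutations of $A$ (each $\eta\in\mathcal{G}$ maps every $A_{i}$ onto some $A_{j}$ and moves only finitely many atoms, hence has finite order), rich enough that $\mathcal{G}$ acts transitively on $\{A_{i}:i\in\omega\}$ and nontrivially inside each block, and $\mathcal{F}$ is generated by $\{\mathrm{fix}_{\mathcal{G}}(E):E\in[A]^{<\omega}\}$. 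I would take two facts about $\mathcal{M}$ from \cite{HT2020} as input: first, $\mathsf{AC_{n}^{-}}$ fails in $\mathcal{M}$, witnessed by the family $\{A_{i}:i\in\omega\}$, since a choice function on an infinite subfamily would have a finite support $E$ and a transposition internal to a block disjoint from $E$ would lie in $\mathrm{fix}_{\mathcal{G}}(E)$ yet move the chosen element; second, $\mathcal{M}$ contains an amorphous set $B$ (namely $A$, equivalently the partition $\{A_{i}:i\in\omega\}$): any subset of $A$ lying in $\mathcal{M}$ has a finite support $E$, hence is all-or-nothing on each block disjoint from $E$ and invariant under permuting those blocks, so it is finite or cofinite. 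In particular ``there are no amorphous sets'' fails in $\mathcal{M}$, and the group $\mathcal{G}$ of $\mathcal{M}$ satisfies the hypothesis of Proposition 3.5, since every element of $\mathcal{G}$ moves only finitely many atoms.

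For assertion (1), I would let $G$ be the complete graph on $V_{G}=B$ and let $f:[B]^{2}\to\{0,1\}$ be constantly $1$. Since $B$ is amorphous it has no countably infinite subset and is not well-orderable, so $|B|\not\leq\aleph_{0}$ and $B$ is uncountable in the sense used throughout the paper. A set monochromatic in colour $0$ contains no pair coloured $1$, hence (every pair being coloured $1$) no pair at all, hence has at most one element, so there is no uncountable set monochromatic in colour $0$; and there is no countably infinite set monochromatic in colour $1$, because every subset of the Dedekind-finite set $B$ is Dedekind-finite. Thus $\mathsf{EDM}$ fails in $\mathcal{M}$. By Proposition 4.3 the statements $\mathsf{EDM^{k}}$, $k\geq2$, are all equivalent to $\mathsf{EDM}=\mathsf{EDM^{2}}$ provably in $\mathsf{ZF}$, so $\neg\mathsf{EDM^{k}}$ holds in $\mathcal{M}$ for every $k\geq2$ as well. (Note the constant colouring does \emph{not} refute $\mathsf{EDM^{k}}$ for $k\geq3$ directly, since $B$ itself is then an uncountable monochromatic set; the passage through Proposition 4.3 is what is needed here.)

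For assertion (2) and the statement $X$, I would invoke Proposition 3.5. Since $\mathcal{G}$ satisfies its hypothesis, $\mathsf{A}$ holds in $\mathcal{M}$ by Proposition 3.5(1), which settles (2), and $\mathsf{CS}$ holds in $\mathcal{M}$ by the proof of Proposition 3.5(1); this handles $X=\mathsf{CS}$. For $X\in\{\mathsf{CAC^{\aleph_{0}}},\mathsf{CAC_{1}^{\aleph_{0}}}\}$, by Proposition 3.5(2) it suffices to verify that $\mathsf{WUT}$ holds in $\mathcal{M}$; one then finishes exactly as in the proof of Proposition 3.5(2), using that $(\mathsf{CAC^{\aleph_{0}}})'$ holds in every permutation model (Proposition 3.3(4)) and $(\mathsf{CAC_{1}^{\aleph_{0}}})'$ holds in every permutation model (Fact 3.2). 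To see $\mathsf{WUT}$ in $\mathcal{M}$ one argues by supports: if $\{S_{\alpha}:\alpha<\kappa\}$ is a well-ordered family of well-orderable sets with support $E$, each $S_{\alpha}$ is $\mathrm{fix}_{\mathcal{G}}(E)$-invariant; being moreover well-orderable, no member of $S_{\alpha}$ can have an atom of its (minimal) support lying in a block disjoint from $E$, for otherwise $\mathrm{fix}_{\mathcal{G}}(E)$ would spread that member across infinitely many blocks and produce a non-well-orderable subset of the well-orderable $S_{\alpha}$. Hence every element of $\bigcup_{\alpha<\kappa}S_{\alpha}$ has support inside the fixed finite set $E^{+}=\bigcup\{A_{i}:A_{i}\cap E\neq\emptyset\}$, and a set with support $E$ all of whose members are supported by $E^{+}$ is well-orderable in $\mathcal{M}$; this gives $\mathsf{WUT}$. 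Note $\mathsf{WUT}$ says nothing about the $\mathsf{AC_{n}^{-}}$-witnessing family $\{A_{i}:i\in\omega\}$, since that family is not well-orderable, so there is no clash with the failure of $\mathsf{AC_{n}^{-}}$.

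The main obstacle I expect is precisely this last verification: one must check that the amorphous structure of $\mathcal{M}$ — which is exactly what makes $\mathsf{AC_{n}^{-}}$ fail and what supplies the amorphous set, hence $\neg\mathsf{EDM}$ — does not propagate into a well-ordered union of well-orderable sets failing to be well-orderable, i.e.\ that $\mathsf{WUT}$ (equivalently, by Proposition 3.5(2), $\mathsf{CAC^{\aleph_{0}}}$ and $\mathsf{CAC_{1}^{\aleph_{0}}}$) holds in $\mathcal{M}$; this is of the same flavour as the corresponding verification for $\mathcal{N}_{1}$ and $\mathcal{N}_{3}$. Everything else is bookkeeping: the negative properties of $\mathcal{N}_{HT}^{1}(n)$ come from \cite{HT2020}, the positive principles $\mathsf{A}$, $\mathsf{CS}$, $\mathsf{CAC^{\aleph_{0}}}$, $\mathsf{CAC_{1}^{\aleph_{0}}}$ follow from Proposition 3.5 together with Proposition 3.3(4) and Fact 3.2, and $\neg\mathsf{EDM}$ together with $\neg\mathsf{EDM^{k}}$ follows from the existence of an amorphous set and Proposition 4.3.
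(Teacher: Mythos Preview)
Your choice of model and the overall architecture are exactly the paper's: $\mathcal{M}=\mathcal{N}_{HT}^{1}(n)$, with the failures of $\mathsf{AC_{n}^{-}}$ and ``there are no amorphous sets'' imported from \cite{HT2020}, item (2) via Proposition~3.5(1), and $\neg\mathsf{EDM^{k}}$ via Proposition~4.3. Two points deserve comment.

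\emph{Item (1).} Your direct refutation of $\mathsf{EDM}$ using the amorphous set with the constant colouring $f\equiv 1$ is correct and is not the paper's route: the paper instead invokes the (cited) failure of $\mathsf{RT}$ in $\mathcal{N}_{HT}^{1}(n)$ \cite{Tac2016a} together with Theorem~4.1(3). Your argument is more elementary and self-contained; the paper's has the advantage of also recording that $\mathsf{RT}$ fails there, which is used elsewhere.

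\emph{The case $X\in\{\mathsf{CAC^{\aleph_{0}}},\mathsf{CAC_{1}^{\aleph_{0}}}\}$.} The paper does not verify $\mathsf{WUT}$ in $\mathcal{M}$; it simply cites \cite{Ban2,BG2021} for $\mathsf{CAC^{\aleph_{0}}},\mathsf{CAC_{1}^{\aleph_{0}}},\mathsf{CS}$ in $\mathcal{N}_{HT}^{1}(n)$ (and points to Proposition~3.5 as an alternative). Your $\mathsf{WUT}$ sketch has the right shape but the crucial sentence ``$\mathrm{fix}_{\mathcal{G}}(E)$ would spread that member across infinitely many blocks and produce a non-well-orderable subset of the well-orderable $S_{\alpha}$'' is not yet a contradiction as written: a subset of a well-orderable set is well-orderable, so you cannot conclude non-well-orderability of the orbit merely from its being infinite. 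What you actually need is a minimal-support argument: if $s$ has \emph{least} support $D$ meeting a block $A_{j}$ disjoint from $E$, then for each block $A_{k}$ disjoint from $E\cup D$ there is $\phi_{k}\in\mathrm{fix}_{\mathcal{G}}(E)$ swapping $A_{j}$ with $A_{k}$, and $\phi_{k}(s)$ has least support meeting $A_{k}$; hence no single finite $F$ can support all of these $\phi_{k}(s)$, contradicting well-orderability of $S_{\alpha}$ via Lemma~2.5. This works once one checks that $\mathcal{N}_{HT}^{1}(n)$ has least supports (equivalently, satisfies Mostowski's intersection lemma), which your sketch tacitly uses but does not establish. Either supply that verification or, as the paper does, defer to \cite{Ban2,BG2021}.
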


\begin{proof} 
Consider the permutation model $\mathcal{N}_{HT}^{1}(n)$ constructed by Halbeisen--Tachtsis in the proof of \cite[Theorem 8]{HT2020} where $\mathsf{AC_{n}^{-}}$ fails. Let $M$ be a model of  $\mathsf{ZFA+AC}$ where $A$ is a countably infinite set of atoms written as a disjoint union $\bigcup\{A_{i}:i\in \omega\}$ where for all $i\in \omega$, $A_{i}=\{a_{i_{1}}, a_{i_{2}},..., a_{i_{n}}\}$ and $\vert A_{i}\vert = n$. The group $\mathcal{G}$ is defined in \cite{HT2020} in a way so that {\em if $\eta\in \mathcal{G}$, then $\eta$ only moves finitely many atoms} and for all $i\in \omega$, $\eta(A_{i})=A_{k}$ for some $k\in \omega$. Let $\mathcal{F}$ be the normal filter generated by $\{$fix$_{\mathcal{G}}(E): E\in [A]^{<\omega}\}$ where $\mathcal{I} = [A]^{<\omega}$ is the normal ideal. 
The model $\mathcal{N}_{HT}^{1}(n)$ is
the permutation model determined by $M$, $\mathcal{G}$, and $\mathcal{F}$. The set of atoms $A$ is amorphous in $\mathcal{N}_{HT}^{1}(n)$ (cf. the proof of \cite[Theorem 8]{HT2020}). 
Banerjee \cite{Ban2, BG2021} observed that $\mathsf{CAC^{\aleph_{0}}}$,   $\mathsf{CS}$, and $\mathsf{CAC_{1}^{\aleph_{0}}}$ hold in $\mathcal{N}_{HT}^{1}(n)$ (cf. Proposition 3.5 as well).

(1). Follows from Theorem 4.1(3), Proposition 4.3, and the fact that $\mathsf{RT}$ fails in $\mathcal{N}_{HT}^{1}(n)$ (cf. \cite{Tac2016a}).

(2). Follows from Proposition 3.5 since if $\eta\in \mathcal{G}$, then $\eta$ only moves finitely many atoms.
\end{proof}

\begin{remark}
The referee communicated that neither $\mathsf{CACT^{\aleph_{0}}}$
nor $\mathsf{CACT_{1}^{\aleph_{0}}}$
is provable in $\mathsf{ZF}$. Consider the second Fraenkel model $\mathcal{N}_{2}$ of \cite{HR1998}, in which the set $A$ of atoms is a countable disjoint union of pairs so that $A =\bigcup \{A_{n}: n \in \omega\}$ where $\vert A_{n}\vert = 2$ for all $n \in \omega$ and, for all $n, m \in \omega$ with $n \not= m$,
$A_{n} \cap A_{m} = \emptyset$; $\mathcal{G}$ is the group of all permutations of $A$ which fix $A_{n}$ for every $n \in \omega$, and $\mathcal{F}$ is
the normal filter on $\mathcal{G}$ generated by the subgroups fix$_{\mathcal{G}}(E), E \in [A]^{<\omega}$. Let

\begin{center}
$P=\{\emptyset\}\bigcup \{f: f$ is a choice function for $\{A_{i}
: i \leq n\}$ for some $n \in \omega\}$.    
\end{center}

Define a partial order $\leq$ on $P$ by stipulating, for all $p, q \in P$, $p \leq q$ if and only if $p \subseteq q$. It
is clear that $(P, \leq) \in \mathcal{N}_{2}$ (because $Sym_{\mathcal{G}}((P, \leq))=\mathcal{G}\in \mathcal{F}$). Furthermore, $(P, \leq)$ is an $\omega$-tree
(with $\emptyset$ as its root), whose chains are all finite since the family $\mathcal{A} = \{A_{n} : n \in \omega\}$, which is
countable in $\mathcal{N}_{2}$, does not have a partial choice function in $\mathcal{N}_{2}$. 
Following the proof of \cite[Theorem 2.11]{Tac2016b}, we can see that
all antichains in $P$ are finite in $\mathcal{N}_{2}$. For the sake of contradiction, assume that $U$ is an infinite antichain in $P$. Let $E\in [A]^{<\omega}$ be a support of $U$. Without loss of
generality assume that $E = \bigcup_{i\leq k} A_{k}$ for some $k \in \omega$. Let 
\begin{center}
$V = \{p \in P : dom(p) = \{A_{0},..., A_{k}\}\land\exists f \in
U(p\subsetneq f)\}$.
\end{center}
Since $U$ is infinite and $\prod_{i\leq k}A_{i}$ is finite, we have that $V  \neq\emptyset$ and there is an element $p_{0}\in V$ such that $Y := \{f \in U : p_{0} \subsetneq f \}$ is infinite. This yields the existence of at least two elements $f$ and $g$ of $Y$ such that $dom(f) \subsetneq dom(g)$.
Let 
\begin{center}
$M = \{m \in \omega : A_{m} \in dom(f) \cap dom(g)$ and $f(A_{m}) 
\neq g(A_{m})\}$. 
\end{center}
Then for all $m \in M$, we have $m > k$.
Let $\phi$ be the permutation of $A$ which swaps $f(A_{m})$ and $g(A_{m})$ for all $m \in M$, and fixes all the other atoms. Clearly, $\phi \in $ fix$_{\mathcal{G}}(E)$. Since $E$ is a support of $U$, we have $\phi(U) = U$. Thus $\phi(f)\in U$. However, $\phi(f) = g\restriction dom(f)$,
so $\phi(f) \subsetneq g$. This contradicts the fact that $U$ is an antichain in $P$. Thus, every antichain in $P$ is finite.

However, $P$ is not countable in $\mathcal{N}_{2}$, and thus $\mathsf{CACT^{\aleph_{0}}}$
and $\mathsf{CACT_{1}^{\aleph_{0}}}$ are both false in $\mathcal{N}_{2}$.
Now, since $\neg\mathsf{CACT^{\aleph_{0}}}$
and $\neg\mathsf{CACT_{1}^{\aleph_{0}}}$ are boundable and have a permutation model, it follows
from the Jech–Sochor First Embedding Theorem (see \cite[Theorem 6.1]{Jec1973}) that they have a
symmetric $\mathsf{ZF}$-model.
\end{remark}

\section{\L{}o\'{s}'s theorem, and the uniqueness of algebraic closures}
We recall a fact that we need in order to prove Theorem 5.2.
\begin{fact}
    If $\mathcal{K}$ is an algebraically closed field, if $\pi$ is a non-trivial automorphism of $\mathcal{K}$ satisfying $\pi^{2}= 1_{\mathcal{K}}$, and if $i=\sqrt{-1} \in\mathcal{K}$, then $\pi(i) = -i \neq i$ (cf. \cite[Note 41]{HR1998}).     
\end{fact}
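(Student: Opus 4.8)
The plan is to route everything through elementary Galois theory together with the Artin--Schreier theorem on algebraically closed fields. First I would form the fixed field $F=\mathcal{K}^{\langle\pi\rangle}$ of the group $\langle\pi\rangle$, which has order exactly $2$ since $\pi\neq 1_{\mathcal{K}}$ and $\pi^{2}=1_{\mathcal{K}}$. By Artin's theorem on fixed fields, $\mathcal{K}/F$ is Galois with group $\langle\pi\rangle$, so $[\mathcal{K}:F]=2$; in particular $F$ is a proper subfield of the algebraically closed field $\mathcal{K}$ of finite index.

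The key step is then to invoke the Artin--Schreier theorem: if $\mathcal{K}$ is algebraically closed and $F\subsetneq\mathcal{K}$ with $[\mathcal{K}:F]$ finite, then $F$ is real closed. Consequently $\operatorname{char}(F)=0$ and $F$ is formally real, so $-1$ is not a square in $F$, i.e. $i\notin F$. Since $F$ is exactly the set of $\pi$-fixed points, this gives $\pi(i)\neq i$.

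To finish, I would apply $\pi$ to $i^{2}=-1$: since $\pi$ fixes the prime field we get $\pi(i)^{2}=\pi(-1)=-1$, so $\pi(i)$ is a root of $X^{2}+1$ in $\mathcal{K}$. As $\operatorname{char}(\mathcal{K})=0\neq 2$, the only roots of $X^{2}+1$ in $\mathcal{K}$ are the two distinct elements $i$ and $-i$, and combining this with $\pi(i)\neq i$ forces $\pi(i)=-i\neq i$.

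I do not expect a genuine obstacle, as the statement is classical; but two small points deserve care. First, the conclusion tacitly needs $\operatorname{char}(\mathcal{K})\neq 2$ (automatic in the context of \cite[Note 41]{HR1998}, where the relevant fields have characteristic $0$), since in characteristic $2$ one has $\sqrt{-1}=1$ in the prime field and $-i=i$, so the statement would fail. Second, if one wishes to avoid quoting Artin--Schreier in full, the same conclusion follows by a direct computation in the degree-$2$ extension: write $\mathcal{K}=F(\sqrt{d})$ with $d\in F$ a non-square (possible since $\operatorname{char}\neq 2$), use that $\mathcal{K}$ algebraically closed forces $\sqrt{d}$ to be a square in $\mathcal{K}$, and expand $\sqrt[4]{d}=a+b\sqrt{d}$ with $a,b\in F$ to deduce $2ab=1$ and $d=-(a/b)^{2}$; hence $\mathcal{K}=F(\sqrt{-1})$, so again $i\notin F$ and the remainder of the argument is unchanged.
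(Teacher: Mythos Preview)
Your argument is correct. The paper, however, does not actually prove this statement: it records it as a known fact with a bare citation to \cite[Note 41]{HR1998} and then uses it as a black box inside the proof of Theorem~5.2. So there is no ``paper's proof'' to compare against beyond the reference.

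What you have supplied is a genuine self-contained justification. The route via Artin's fixed-field theorem to get $[\mathcal{K}:F]=2$, followed by Artin--Schreier to force $F$ real closed (hence $i\notin F=\mathcal{K}^{\langle\pi\rangle}$), is the standard and cleanest way to see this; your finishing step that $\pi(i)$ must be the other root of $X^{2}+1$ is immediate. Your caveat about characteristic $2$ is well taken and is indeed an implicit hypothesis in the cited source (where everything is in characteristic $0$); your alternative hands-on computation showing $\mathcal{K}=F(\sqrt{-1})$ directly is also correct and gives a nice way to avoid quoting Artin--Schreier in full. Either version would serve perfectly well as a proof the paper chose to omit.
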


\begin{thm}
{\em $(\mathsf{ZFA})$ 
The following hold:
\begin{enumerate}
    \item {\em $\mathsf{AC^{LO}}$ + \textbf{Form 233}} neither implies $\mathsf{LT}$ nor implies $\mathsf{W_{\aleph_{1}}}$.
    \item {\em $\mathsf{AC^{LO}}$ + \textbf{Form 233} +  $\mathsf{EDM}$} neither implies $\mathsf{LT}$ nor implies $\mathsf{W_{\aleph_{2}}}$.
\end{enumerate}
}
\end{thm}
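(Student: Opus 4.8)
The plan is to construct a single permutation model $\mathcal{N}$ of $\mathsf{ZFA}$ in which $\mathsf{AC^{LO}}$, \textbf{Form 233}, and $\mathsf{EDM}$ all hold, but $\mathsf{LT}$ fails and moreover $\mathsf{W_{\aleph_{2}}}$ fails; then appeal to Pincus' Transfer Theorem (Theorem 2.6) where possible. Since $\mathsf{EDM'}$ and all the ``primed'' statements hold automatically in any permutation model (Proposition 3.3(4)), the real content is to get full $\mathsf{EDM}$ (not just $\mathsf{EDM'}$), $\mathsf{AC^{LO}}$, and \textbf{Form 233} simultaneously while killing $\mathsf{LT}$. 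The natural candidate is a Mostowski-type (linearly ordered) construction, since $\mathcal{N}_3$ already gives us $\mathsf{EDM}$ (Theorem 4.2(3)) and $\mathsf{AC^{LO}}$-type behavior; the issue is that in $\mathcal{N}_3$ one must also secure \textbf{Form 233} and the failure of $\mathsf{LT}$, $\mathsf{W_{\aleph_{1}}}$, $\mathsf{W_{\aleph_{2}}}$.

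First I would set up the model: start with $M \models \mathsf{ZFA+AC}$ with a set of atoms $A$ carrying a dense linear order without endpoints (order-isomorphic to $\mathbb{Q}$, or a suitable product/lexicographic variant giving more atoms for the $\aleph_2$-statement), take $\mathcal{G}$ the group of order-automorphisms of $(A,\le)$, and $\mathcal{F}$ generated by pointwise stabilizers of finite subsets of $A$ (so $\mathcal{I}=[A]^{<\omega}$). Then I would verify: (a) $\mathsf{AC^{LO}}$ holds --- this is the standard fact that linearly orderable families have choice functions in the ordered Mostowski model (Tachtsis \cite{Tac2019a} is cited in the excerpt for exactly such a model, so I would lean on that argument); (b) \textbf{Form 233} holds --- here I would use Pincus' result cited in Section 1.1 that \textbf{Form 233} is consistent with the existence of amorphous sets, but more directly I would argue that in this model any field with an algebraic closure has that closure well-orderable / rigid enough that Fact 5.1 forces uniqueness, i.e. there is no room for a non-trivial $\pi$ with $\pi^2=1$ swapping $i$ and $-i$ because the relevant symmetry group of the ordered atoms has no involutions (every order-automorphism of a linear order has infinite order or is the identity on any interval it doesn't reverse --- and $\mathcal{G}$ contains no order-reversing maps); (c) $\mathsf{EDM}$ holds --- by Theorem 4.2(2), since this model satisfies Mostowski's intersection lemma, $A$ is Dedekind-finite, and $\mathsf{RT}$ holds in it (the $\mathsf{RT}$ proof for $\mathcal{N}_3$ from \cite{Tac2016a} transfers); (d) $\mathsf{LT}$ fails --- reuse Tachtsis' argument from \cite{Tac2019a} that $\mathsf{LT}$ fails in this ordered model (an ultrapower of a suitable structure over the atoms is not elementarily equivalent to it because new elements appear that the symmetric-model machinery cannot accommodate), and this is consistent with $\mathsf{AC^{LO}}$ precisely as in that paper; (e) $\mathsf{W_{\aleph_1}}$ (and, with the enlarged atom set, $\mathsf{W_{\aleph_2}}$) fails --- because the set of atoms $A$ (respectively the power set or a lexicographic $\aleph_2$-indexed variant) is neither of size $\le \aleph_1$ nor of size $\ge \aleph_1$ in $\mathcal{N}$, being Dedekind-finite.

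For part (1) I then transfer to $\mathsf{ZF}$ if desired: ``$\mathsf{AC^{LO}} \wedge$ \textbf{Form 233} $\wedge \neg\mathsf{LT} \wedge \neg\mathsf{W_{\aleph_1}}$'' needs to be (a conjunction of) injectively boundable statements for Theorem 2.6 to apply; I would check each conjunct's form against \cite[Note 103]{HR1998} as is done in the proof of Theorem 4.1(4), and if the negations are boundable this goes through. Part (2) is the same model (or its $\aleph_2$-variant) with $\mathsf{EDM}$ added to the list, which is free because $\mathsf{EDM}$ already holds there; the only new point is arranging $\neg\mathsf{W_{\aleph_2}}$, which is why I'd use a larger, lexicographically ordered atom set $A$ with $|A|$ strictly incomparable to $\aleph_2$ while still dense-linearly-ordered so that the $\mathsf{AC^{LO}}$, \textbf{Form 233}, and (via intersection lemma $+$ $\mathsf{RT}$) $\mathsf{EDM}$ arguments all survive verbatim.

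The main obstacle I anticipate is simultaneously keeping \textbf{Form 233} true and $\mathsf{LT}$ false in a model where the atoms are Dedekind-finite and linearly ordered: one must check that no field in $\mathcal{N}$ acquires two non-isomorphic algebraic closures (which requires controlling automorphisms of hypothetical closures, using Fact 5.1 to reduce to the absence of order-reversing or finite-order elements in $\mathcal{G}$), while the $\mathsf{LT}$-refuting structure and ultrafilter genuinely live in $\mathcal{N}$. Verifying that $\mathsf{RT}$ holds in the (possibly enlarged) ordered model --- needed to invoke Theorem 4.2(2) for $\mathsf{EDM}$ --- is the second delicate point, since the \cite{Tac2016a} proof for $\mathcal{N}_3$ uses the specific $\mathbb{Q}$-structure; for the $\aleph_2$-variant I would need to adapt that Ramsey argument to the lexicographic order, or alternatively choose the enlargement so that $\mathsf{RT}$-preservation is immediate (e.g. a well-orderable blow-up of $\mathcal{N}_3$'s atoms).
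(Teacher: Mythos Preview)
Your approach diverges from the paper's and has a genuine gap. The model Tachtsis constructs in \cite[Theorem 4.7]{Tac2019a} is \emph{not} a linearly ordered Mostowski model: it is a block model in which $|A|=\aleph_1$, $A$ is partitioned into $\aleph_1$ countable blocks $A_i$, $\mathcal{G}$ consists of permutations that permute the blocks and move only countably many atoms, and supports are countable unions of blocks. In that model Tachtsis already proved $\mathsf{AC^{LO}}\wedge\neg\mathsf{LT}$; the paper's contribution for part (1) is to verify \textbf{Form 233} there (by proving every algebraic closure in $\mathcal{N}$ is well-orderable, via an intersection-lemma argument specific to this block structure together with Fact 5.1) and to show $\mathsf{W_{\aleph_1}}$ fails (by proving $\mathrm{Sym}(A)=\aleph_0\mathrm{Sym}(A)$ in $\mathcal{N}$, whence no injection $\aleph_1\to A$ exists). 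For part (2) the paper simply rescales, replacing $(\aleph_0,\aleph_1)$ by $(\aleph_1,\aleph_2)$; the same arguments give $\mathsf{AC^{LO}}$, $\neg\mathsf{LT}$, \textbf{Form 233}, $\neg\mathsf{W_{\aleph_2}}$, and now the support ideal is closed under $\aleph_1$-unions, so $\mathsf{DC_{\aleph_1}}$ holds and $\mathsf{EDM}$ follows from Theorem 4.1(1). No appeal to $\mathsf{RT}$, Dedekind-finite atoms, or Theorem 4.2(2) is made.

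Your central gap is the assertion that $\mathsf{LT}$ fails in $\mathcal{N}_3$ (or an ordered variant): you say ``reuse Tachtsis' argument from \cite{Tac2019a}'', but that argument is tailored to the block model above and does not transfer to a dense-linear-order model; you have misidentified what \cite{Tac2019a} builds. Without an independent proof of $\neg\mathsf{LT}$ in your model, neither part follows. Your \textbf{Form 233} sketch is also off target: Fact 5.1 concerns involutive automorphisms of the \emph{field} $\mathcal{K}$, not elements of $\mathcal{G}$, so ``$\mathcal{G}$ has no involutions'' is not the relevant obstruction; the paper's route is instead to force the closure to be well-orderable. Finally, your plan for part (2) --- an enlarged lexicographic atom set still with finite supports so that Theorem 4.2(2) applies --- is vague, and you yourself flag the $\mathsf{RT}$ verification as delicate; the paper avoids all of this by deriving $\mathsf{EDM}$ from $\mathsf{DC_{\aleph_1}}$ in the rescaled block model.
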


\begin{proof}
(1). We consider the permutation model $\mathcal{N}$ given in the proof of \cite[Theorem 4.7]{Tac2019a} where $\mathsf{AC^{LO}}$ holds and $\mathsf{LT}$ fails. We start with a model $M$ of $\mathsf{ZFA + AC}$ with an $\aleph_{1}$-sized set $A$ of atoms so that $A = \bigcup \{A_{i} : i < \aleph_{1}\}$ where $\vert A_{i}\vert=\aleph_{0}$ for all $i < \aleph_{1}$, and $A_{i}\cap A_{j} = \emptyset$ for all
$i, j < \aleph_{1}$ with $i \not= j$. Let $\mathcal{G}$ be the group of all permutations $\phi$ of $A$ such that $\forall i < \aleph_{1}, \exists j < \aleph_{1}, \phi(A_{i}) = A_{j}$,
and $\phi$ moves only $\aleph_{0}$ atoms. Let $\mathcal{F}$ be the normal filter of subgroups of $\mathcal{G}$ generated by fix$_{\mathcal{G}}(E)$, where $E = \bigcup\{A_{i} : i \in \mathcal{I}\}, \mathcal{I} \in [\aleph_{1}]^{<\aleph_{1}}$. The model $\mathcal{N}$ is the permutation model determined by $M$, $\mathcal{G}$ and $\mathcal{F}$. We note that, if $x \in \mathcal{N}$, then there exists $E = \bigcup\{A_{i}: i \in \mathcal{I}\}$ for some $\mathcal{I} \in [\aleph_{1}]^{<\aleph_{1}}$ such that fix$_{\mathcal{G}}(E) \subseteq Sym_{\mathcal{G}}(x)$. Any such set $E \subseteq A$ is called a {\em support} of $x$.

\begin{claim}
\textbf{Form 233} holds in $\mathcal{N}$.
\end{claim}
\begin{proof}
Fix a field $\mathcal{K}'$ in $\mathcal{N}$. Let $\mathcal{K}$ be an algebraic closure of $\mathcal{K}'$ in $\mathcal{N}$ with support $E=\bigcup\{A_{i} : i \in K\}$ for some $K\in [\aleph_{1}]^{<\aleph_{1}}$. We show that $\mathcal{K}$ is well-orderable in $\mathcal{N}$.
Otherwise, there is a $x\in\mathcal{K}$ and a $\phi \in $ fix$_{\mathcal{G}}(E)$ with $\phi(x)\neq x$.
Under such assumptions, Tachtsis constructed a permutation $\psi \in$ fix$_{\mathcal{G}}(E)$ such
that $\psi(x) \neq x$ but $\psi^{2}$ is the identity mapping (cf. the proof of $\mathsf{LW}$ (every linearly ordered set can be well-ordered \cite[\textbf{Form 90}]{HR1998}) in $\mathcal{N}$ from \cite[claim 4.10]{Tac2019a}).
The permutation $\psi$ induces an automorphism of $\mathcal{K}$ and we can therefore apply Fact 5.1 to conclude that $\psi(i) = - i \neq i$ for some $i =\sqrt{-1}\in\mathcal{K}$. 

In order to obtain a contradiction, we prove that for every $\pi\in$ {fix}$ _{\mathcal{G}}(E)$, $\pi(i)=i$ for every $i=\sqrt{-1} \in\mathcal{K}$.
Fix an $i=\sqrt{-1} \in\mathcal{K}$. It is enough to show that $E$ is a support of $i$. We note that $i$ is a solution to the equation $x^{2} + 1 = 0$ all of whose coefficients are fixed by any $\eta\in$ fix$_{\mathcal{G}}(E)$. So if $\eta\in$ fix$_{\mathcal{G}}(E)$, then $\eta(i)$ is also a solution to $x^{2}+ 1 = 0$. Suppose $E$ is not a support of $i$. We follow the
subsequent three steps to complete the proof.

\textbf{Step 1:} We follow the ideas due to Tachtsis from \cite[proof of Lemma 1]{Tac2016a} to show that {\em if $x \in \mathcal{N}$ and $E_{1}, E_{2}$ are supports of $x$, then $E_{1} \cap E_{2}$ is a support of $x$, i.e., $\mathcal{N}$ satisfies Mostowski’s intersection lemma} by making the minor, necessary, modifications.

Let $x \in \mathcal{N}$ and $E_{1} = \bigcup\{A_{i} : i \in K_{1}\}, E_{2} = \bigcup\{A_{i} : i \in K_{2}\}$, where $K_{1},K_{2}\in [\aleph_{1}]^{<\aleph_{1}}$, be two supports of $x$. Let $\phi \in$ fix$_{\mathcal{G}}(E_{1}\cap E_{2})$, where
$E_{1} \cap E_{2} = \bigcup\{A_{i} : i \in K_{1} \cap K_{2}\}$. We will show that $\phi(x) = x$. 
In particular, we construct three permutations $\rho_{1}\in$ fix$_{\mathcal{G}}(E_{1})$, $\rho_{2}\in$ fix$_{\mathcal{G}}(E_{2})$, and $\phi'\in$ fix$_{\mathcal{G}}(E_{1}\cup E_{2})$ such that $\phi=\rho^{-1}_{1}\rho^{-1}_{2}\phi'\rho_{2}\rho_{1}$. Then $\phi(x)=\rho^{-1}_{1}\rho^{-1}_{2}\phi'\rho_{2}\rho_{1}(x)=x$, and we are done. 

To this end, we first let $\mathcal{A} = \{A_{i}: i < \aleph_{1}\}$ and, for a set $x \subseteq A$, we let $tr(x) = \{i \in \aleph_{1}:A_{i} \cap x \neq \emptyset\}$.
Put $W = \{a \in A : \phi(a) \neq a\}$ and 
$W^{*} = \bigcup\{A_{i}: i \in tr(W)\}$. 
Note that, from the definition of $\mathcal{G}$, $W$ is countable, and thus so is $W^{*}$. Let $\mathcal{U}, \mathcal{V}$ be two disjoint subsets of $\mathcal{A}$ such that each of the sets $U =\bigcup \mathcal{U}$ and $V =
\bigcup\mathcal{V}$ is disjoint from $E_{1} \cup E_{2} \cup W^{*}$, $tr(U)$ has the same order type as $K_{2} \backslash K_{1}$ and $tr(V)$ has the same order type as $(K_{1} \cup tr(W)) \backslash K_{2}$.
Consider two bijections $f : U \rightarrow E_{2}\backslash E_{1}$ and $g : V \rightarrow (E_{1} \cup W^{*})\backslash E_{2}$ so that, if $i \in tr(U)$ and $j \in tr(V)$, then 
$f[A_{i}] \in \{A_{k} : k \in K_{2}\backslash K_{1}\}$ and $g[A_{j}] \in \{A_{k} : k \in (K_{1} \cup tr(W)) \backslash K_{2}\}$.
We define
\begin{center}
$\rho_{1}=\prod_{u\in U}(u, f(u))$ and $\rho_{2}= \prod_{v\in V} (v, g(v))$,     
\end{center}

i.e., each one of $\rho_{1}$ and $\rho_{2}$ as a product of disjoint transpositions. Define $\phi'=\rho_{2}\rho_{1}\phi\rho^{-1}_{1}\rho^{-1}_{2}$. 
We can see that $\rho_{1}\in$ fix$_{\mathcal{G}}(E_{1})$ and $\rho_{2}\in$ fix$_{\mathcal{G}}(E_{2})$. In order to see that $\phi'\in$ fix$_{\mathcal{G}}(E_{1}\cup E_{2})$, let $e\in E_{1}\cup E_{2}$. We consider three cases $e\in E_{2}\backslash E_{1}$ or $e\in E_{1}\cap E_{2}$ or $e\in E_{1}\backslash E_{2}$. In each one of these cases, we can see that $\phi'(e)=e$. This completes the proof of step 1.

Let $E'$ be a support of $i$ and let $F = E' \backslash E$. Then $F \neq \emptyset$ (since $E$ is not a support of $i$) and $F \cap E = \emptyset$. Without loss of generality, we may assume that $E\subsetneq E'$. 

\textbf{Step 2:} Using step 1, we prove that {\em if $\phi, \phi' \in$ fix$_{\mathcal{G}}(E)$ and $\phi(F) \cap \phi'(F) = \emptyset$, then $\phi(i) \neq \phi'(i)$}. 
For the sake of contradiction assume that $\phi, \phi' \in$ fix$_{\mathcal{G}}(E)$ such that $\phi(F) \cap \phi'(F) = \emptyset$ and $\phi(i)=\phi'(i)$. Then $(\phi')^{-1}\phi(i)=i$. Since $E'$ supports $i$, we have $(\phi')^{-1}\phi(E')$ supports $(\phi')^{-1}\phi(i)=i$. By step 1, $(\phi')^{-1}\phi(E')\cap E'$ supports $i$. However, $(\phi')^{-1}\phi(E')\cap E'=E$ since $\phi(F) \cap \phi'(F) = \emptyset$ and $\phi, \phi' \in$ fix$_{\mathcal{G}}(E)$, which contradicts the assumption that $E$ is not a support of $i$. 

\textbf{Step 3:} Using step 2 and the features of $\mathcal{N}$, we can observe that {\em there is a set $S = \{\phi_{k}(i) : k \in \omega\}$ in $\mathcal{N}$ such that for every $k \in \omega$, $\phi_{k}\in$ fix$_{\mathcal{G}}(E)$, and for all $k,l \in \omega$, if $k \neq l$ then $\phi_{k}(i) \neq \phi_{l}(i)$.}

In particular, we consider a denumerable collection $\{B_{i}:i\in \omega\}$ of subsets of $A\backslash (E\cup F)$ where $B_{i}=\bigcup\{A_{j}:j\in K_{i}\}$ for some $K_{i}\in [\aleph_{1}]^{<\aleph_{1}}$ such that $\vert K_{i}\vert= \vert tr(F)\vert$ for every $i\in\omega$, and
$B_{i}\cap B_{j}=\emptyset$ if $i\neq j$. 
For every $k \in \omega$, let $H_{k} : F \rightarrow B_{k}$ 
be a bijection such that, for every $i \in tr(F)$, $H_{k}[A_{i}] = A_{j}$ for some $j \in K_{k}$, and also let $\phi_{k} = \prod_{a\in F}(a, H_{k}(a))$.
Then $\phi_{k}(F)=B_{k}$.
Fix any $k,l\in\omega$ such that $k\neq l$. 
By step 2, $\phi_{k}(i) \neq \phi_{l}(i)$ since $B_{k}\cap B_{l}=\emptyset$. Since, $E'$ is a support of $i$, $\phi_{k}(E')$ is a support of $\phi_{k}(i)$ for every $k\in\omega$. Then $\bigcup _{k\in\omega}\phi_{k}(E')$ is a support of $S = \{\phi_{k}(i): k \in \omega\}$ since the set of supports is closed under countable unions. Consequently, the set $S$ is in $\mathcal{N}$ and $S$ is infinite (in fact, countably infinite).

So, the equation $x^{2} + 1 = 0$ has infinitely many solutions in $\mathcal{K}$, which is a contradiction. 
Thus, $E$ is a support of $i$.

Since $\mathcal{K}$ is well-orderable in $\mathcal{N}$, we can use transfinite induction without using any form of choice to finish the proof. 
\end{proof}

\begin{claim}
$W_{\aleph_{1}}$ fails in $\mathcal{N}$.
\end{claim}
\begin{proof}
First, we show that (Sym($A$)) = $\aleph_{0}$Sym($A$) in $\mathcal{N}$. For the sake of contradiction, assume $f\in ($Sym$(A))\backslash \aleph_{0}$Sym($A)$.
Let $E = \bigcup\{A_{i} : i \in \mathcal{I}\}, \mathcal{I} \in [\aleph_{1}]^{<\aleph_{1}}$ be a support of $f$. Then there exists $i \in \aleph_{1}\backslash \mathcal{I}$ such that $a \in A_{i}$, $b \in A\backslash (E \cup \{a\})$ and $b = f(a)$.
Let $b \in A_{i}$. Consider $\phi\restriction A_{i}$ such that  $\phi\restriction A_{i}$ moves every atom in $A_{i}$ except $b$ and $\phi\restriction A\backslash A_{i} = 1_{A\backslash A_{i}}$. Clearly,  $\phi\in\mathcal{G}$. Also, $\phi(b)=b$, $\phi\in$ fix$_{\mathcal{G}}(E)$, and hence $\phi(f) = f$. Thus
\begin{center}
$(a,b)\in f\implies (\phi(a), \phi(b))\in \phi(f)\implies (\phi(a), b)\in \phi(f)=f$.   
\end{center}

So $f$ is not injective; a contradiction. 
If $b \in A \backslash (E \cup A_{i})$, then 
consider $\phi\restriction A_{i}$ such that  $\phi\restriction A_{i}$ moves every atom in $A_{i}$ and $\phi\restriction A\backslash A_{i} = 1_{A\backslash A_{i}}$. Again $\phi\in\mathcal{G}$, and we easily obtain a contradiction.

We note that $|A|\not\leq\aleph_{1}$
in $\mathcal{N}$.
In order to show that $W_{\aleph_{1}}$ fails, we prove that there is no injection $f:\aleph_{1}\rightarrow A$. Assume there exists such an $f$ and suppose $\{y_{n}\}_{n\in \aleph_{1}}$ is an enumeration of the elements of $Y=f(\aleph_{1})$. 
We can use transfinite recursion, without using any form of choice, to construct a bijection $h: Y\rightarrow Y$ such that $h(x)\neq x$ for any $x\in Y$. Define $g:A \rightarrow A$ as follows: 
$g(x)=h(x)$ if $x\in Y$, and $g(x)=x$ if $x\in A\backslash Y$.
Clearly $g \in$ Sym($A$)$\backslash$ $\aleph_{0}$Sym($A$), and hence Sym($A$) $\neq $ $\aleph_{0}$Sym($A$); a contradiction.
\end{proof}

(2). Consider the permutation model of (1) (say $\mathcal{N}$) by replacing $\aleph_{0}$ and $\aleph_{1}$ with $\aleph_{1}$ and $\aleph_{2}$ respectively. Following the arguments of \cite[Theorem 4.7]{Tac2019a}, we can see that $\mathsf{AC^{LO}}$ holds in $\mathcal{N}$, but $\mathsf{LT}$ fails. By the arguments of the previous proof, $\mathsf{W_{\aleph_{2}}}$ fails and \textbf{Form 233} holds in $\mathcal{N}$.
Moreover, $\mathsf{DC_{\aleph_{1}}}$ holds since $\mathcal{I}$ is closed under $<\aleph_{2}$ unions (cf. \cite[the arguments in the proof of Theorem 8.3 (i)]{Jec1973}). Consequently, $\mathsf{EDM}$ holds by Theorem 4.1(1).
\end{proof}

\section{Concluding remarks and questions}
\subsection{Remarks}
(1). Recently, Banerjee \cite{Ban2023} and Karagila \cite{Kar2019} proved that if $V$ is a model of $\mathsf{ZFC}$, then $\mathsf{DC}_{\aleph_{1}}$ can be preserved in the symmetric extension $\mathcal{N}$ of $V$ (symmetric submodel of a forcing extension where $\mathsf{AC}$ can consistently fail) if the forcing notion $\mathbb{P}$ is either $\aleph_{2}$-distributive or $\aleph_{2}$-c.c., $\mathcal{G}$ is any group of automorphisms of $\mathbb{P}$, and the normal filter $\mathcal{F}$ of subgroups over $\mathcal{G}$ is $\aleph_{2}$-complete. By Theorem 4.1, $\mathsf{EDM}$ holds in $\mathcal{N}$.

(2). By Theorems 4.1(1,3), and the facts that $\mathsf{DC_{\aleph_{1}}}$ does not imply $\mathsf{LT}$ and $\mathsf{LT}$ does not imply $\mathsf{RT}$ in $\mathsf{ZF}$ (cf. \cite[Theorems 4.3, 4.13]{Tac2019a}), $\mathsf{LT}$ and $\mathsf{EDM}$ are mutually independent in $\mathsf{ZF}$.

(3). Fix $X\in\{\mathsf{BPI},\mathsf{KW},\mathsf{AC_{WO}}$, 2-coloring theorem, ``There are no amorphous sets"$\}$.
Blass \cite{Bla1977} proved that
$\mathsf{RT}$ is false in the basic Cohen model (Model $\mathcal{M}_{1}$ in \cite{HR1998}) where $X$ holds. Following Theorem 4.1(3), $X$ does not imply $\mathsf{EDM}$ in $\mathsf{ZF}$. On the other hand, $X$ fails in $\mathcal{N}_{1}$.
Thus $\mathsf{EDM}$ and $X$ are mutually independent in $\mathsf{ZFA}$ by Theorem 4.2(1).

(4). Fix $X\in \{\mathsf{A},\mathsf{WOAM}, \mathsf{CS}\}$. We can see that $X$ and $\mathsf{EDM}$ are mutually independent in $\mathsf{ZFA}$. Following Theorem 4.4 and the fact that $X$ holds in $\mathcal{N}_{HT}^{1}(2),$\footnote{Tachtsis proved $\mathsf{WOAM}$ holds in $\mathcal{N}_{HT}^{1}(2)$, see \cite[Lemma 2]{Tac2016a}.} $X$ does not imply $\mathsf{EDM}$ in $\mathsf{ZFA}$. The other direction follows from Theorem 4.2(3).

(5). Consider the permutation model $\mathcal{M}$ from Corollary 3.6 where $\mathsf{CAC^{\aleph_{0}}}$ fails (and thus $\mathsf{EDM}$ fails by Theorem 4.1) and $\mathsf{DT}$ holds. Secondly, we consider the permutation model $\mathcal{V}$ from \cite[Theorem 9(4)]{Tac2022} where $\mathsf{DT}$ fails and $\mathsf{DC_{\aleph_{1}}}$ holds, and hence  $\mathsf{EDM}$ and $\mathsf{CAC^{\aleph_{0}}}$ hold as well. Thus if $X\in\{\mathsf{EDM}, \mathsf{CAC^{\aleph_{0}}}\}$, then $X$ and $\mathsf{DT}$ are mutually independent in $\mathsf{ZFA}$. 

(6).  Following the arguments of \cite[Theorem 4(11)]{Tac2022} due to Tachtsis (where he proved that $\mathsf{CAC_{1}^{\aleph_{0}}}$ implies $\mathsf{CAC}$ in $\mathsf{ZF}$) we can see that $\mathsf{CAC^{\aleph_{0}}}$ implies $\mathsf{CAC}$ in $\mathsf{ZF}$. By Theorem 4.1(4), $\mathsf{CAC}$ does not imply $\mathsf{CAC^{\aleph_{0}}}$ in $\mathsf{ZF}$ since $\mathsf{DF=F}$ implies $\mathsf{CAC}$ in $\mathsf{ZF}$.

(7). The referee remarked that $\mathsf{CACT_{1}^{\aleph_{0}}}+\mathsf{CACT^{\aleph_{0}}}\not\rightarrow\mathsf{CAC}$ in $\mathsf{ZFA}$. 
In particular, by \cite[Theorem 6]{Tac2022} due to Tachtsis, $\mathsf{AC_{WO}}$ (i.e., the axiom of choice for families of non-empty, well-orderable sets) does not imply $\mathsf{CAC}$ in $\mathsf{ZFA}$, and thus neither does $\mathsf{AC_{fin}^{\aleph_{0}}}$ imply $\mathsf{CAC}$ in $\mathsf{ZFA}$. This, together with Proposition 3.3(5) yields $\mathsf{CACT_{1}^{\aleph_{0}}}+\mathsf{CACT^{\aleph_{0}}}$ does not imply $\mathsf{CAC}$ in $\mathsf{ZFA}$.

(8). In the second Fraenkel model $\mathcal{N}_{2}$, $\mathsf{AC_{fin}^{\aleph_{0}}}$ fails but $\mathsf{MC}$ holds. By Theorem 4.1, if $X\in \{\mathsf{EDM},\mathsf{CAC_{1}^{\aleph_{0}}}, \mathsf{CAC^{\aleph_{0}}}\}$ then $\mathsf{MC}$ does not imply $X$ in $\mathsf{ZFA}$. Since $\mathsf{MC}$ fails in $\mathcal{N}_{1}$, $\mathsf{EDM}$ and $\mathsf{MC}$ are mutually independent in $\mathsf{ZFA}$ by Theorem 4.2. 

\subsection{Questions}
\begin{question}
Does $\mathsf{CAC^{\aleph_{0}}}$  imply $\mathsf{CAC_{1}^{\aleph_{0}}}$ in $\mathsf{ZF}$ or in $\mathsf{ZFA}$? 
\end{question}

\begin{question}
Does $\mathsf{BPI}+\mathsf{DC}$ imply $\mathsf{EDM}$ in $\mathsf{ZF}$ or in $\mathsf{ZFA}$?
\end{question}

\begin{question}Does $\mathsf{WOAM}$ imply $\mathsf{CAC^{\aleph_{0}}}$ in $\mathsf{ZF}$ or in $\mathsf{ZFA}$?
\end{question}

\begin{question}
Does $\mathsf{AC^{LO}}$ imply $\mathsf{EDM}$ in $\mathsf{ZFA}$?
\end{question}

\begin{question}
Does $\mathsf{EDM}$ hold in Brunner/Pincus’s
Model ($\mathcal{N}_{26}$ in \cite{HR1998})?

\begin{question}Does either of $\mathsf{CACT^{\aleph_{0}}}$ and $\mathsf{CACT_{1}^{\aleph_{0}}}$ implies $\mathsf{AC_{fin}^{\aleph_{0}}}$ in $\mathsf{ZF}$ or in $\mathsf{ZFA}$? 

\end{question}
\begin{question}
Does $\mathsf{AC_{fin}^{\aleph_{0}}}$ hold in Pincus's Model $X$ ($\mathcal{N}_{34}$ in \cite{HR1998})?
\end{question}

\end{question}
\section{Acknowledgements}
The authors would like to thank Prof. Lajos Soukup and Prof. Eleftherios Tachtsis for several discussions concerning Kurepa's result ($\mathsf{CAC_{1}^{\aleph_{0}}}$) and its variants, which have been the chief motivation for us in continuing
the research on this intriguing topic.
The authors are grateful to the anonymous referee for reading the manuscript in detail and for providing several comments and suggestions which improved the quality
and the exposition of the paper. We are especially thankful to the referee for Remark 4.5, Remark 6.1(7), and for outlining the three main steps in the proof of claim 5.3, which gave us the motivation to fill in the details.

\end{document}